\setlist[enumerate]{topsep=2pt, parsep=0pt, partopsep=0pt, itemsep=2pt, label=\rm{(\roman*)}}
\renewcommand{\section}{%
\@startsection{section}{1}%
  \z@{.7\linespacing\@plus\linespacing}{.5\linespacing}%
 {\normalfont\large\bfseries\centering}}
\newtheorem{theorem}{Theorem}[section]
\newtheorem{corollary}[theorem]{Corollary}
\newtheorem{lemma}[theorem]{Lemma}
\newtheorem{proposition}[theorem]{Proposition}
\newtheorem*{theorem*}{Theorem} 
\newtheorem*{corollary*}{Corollary}
\newtheorem*{conjecture*}{Conjecture}
\newtheorem*{lemma*}{Lemma}
\newtheorem*{proposition*}{Proposition}
\newtheorem*{problem*}{Problem}
\newtheorem*{axiom*}{Axiom}
\newtheorem*{example*}{Example}
\newtheorem*{exercise*}{Exercise}
\theoremstyle{definition}
\newtheorem*{remark*}{Remark}
\newtheorem*{definition*}{Definition}
\renewcommand{\l}{\left}
\renewcommand{\r}{\right}
\newcommand{\eps}{\varepsilon}
\newcommand{\N}{{\mathbb N}}
\newcommand{\R}{{\mathbb R}}
\newcommand{\x}{\times}
\newcommand{\del}{\partial}
\def\norm[#1]{\left\Vert #1 \right\Vert}
\def\tbra[#1,#2]{\left\langle #1 , #2\right\rangle} 
\def\rbra[#1,#2]{\left( #1 , #2 \right)} 
\def\sbra[#1,#2]{\left[ #1 , #2 \right]} 
\newcommand{\ce}{\mathrel{\mathop:}=}
\newcommand{\ec}{=\mathrel{\mathop:}}
\def\lang[#1]{\left\langle{#1}\right\rangle}
\newcommand{\sgn}{\operatorname{sgn}}
\begin{document}

\title[]{ Sharp thresholds for stability and instability of standing waves in a double power nonlinear Schr\"odinger equation}


\author{Masayuki Hayashi}
\address{Research Institute for Mathematical Sciences, Kyoto University, Kyoto 606-8502, Japan}
\curraddr{}
\email{hayashi@kurims.kyoto-u.ac.jp}
\thanks{}


\date{}

\dedicatory{}


\begin{abstract}
We study the stability/instability of standing waves for the one dimensional nonlinear Schr\"odinger equation with double power nonlinearities:
\begin{align*}
&i\del_t u +\del_x^2 u -|u|^{p-1}u +|u|^{q-1}u=0, \quad (t,x)\in \R\times\R ,~1<p<q.
\end{align*}
When $q<5$, the stability properties of standing waves $e^{i\omega t}\phi_{\omega}$ may change for the frequency $\omega$. A sufficient condition for yielding instability for small frequencies are obtained in previous results, but it has not been known what the sharp condition is. In this paper we completely calculate the explicit formula of $\lim_{\omega\to0}\del_{\omega}\|\phi_{\omega}\|_{L^2}^2$, which is independent of interest, and establish the sharp thresholds for stability and instability of standing waves.
\end{abstract}

\maketitle

\setcounter{tocdepth}{1}
\tableofcontents

\numberwithin{equation}{section} 
\section{Introduction}

In this paper we consider the double power nonlinear Schr\"{o}dinger equation:
\begin{align}
\label{NLS}
\tag{NLS}
i\del_t u +\Delta u +a|u|^{p-1}u +b|u|^{q-1}u=0, \quad (t,x)\in \R\times\R^d ,~1<p<q.
\end{align}
The energy of \eqref{NLS} is given by
\begin{align*}
E(u)&:=\frac{1}{2}\|\nabla u \|_{L^2}^2 -\frac{a}{p+1}\| u\|_{L^{p+1}}^{p+1} -\frac{b}{q+1}
\| u\|_{L^{q+1}}^{q+1},
\end{align*}
which is conserved under the flow. It is well known that \eqref{NLS} has standing waves $e^{i\omega t}\phi_{\omega}(x)~(\omega>0)$, if the nonlinearities satisfy each of the following conditions:
\begin{enumerate}[label=(\Alph*)]

\item defocusing, focusing $(a>0,\,b<0)$,
\label{df}

\item focusing, defocusing $(a>0,\,b<0)$,
\label{fd}
\item focusing, focusing $(a>0,\,b>0)$.
\label{ff}
\end{enumerate}
From the general theory \cite{GSS87}, the stability/instability of standing waves is determined by the sign of the function
\begin{align*}
M'(\omega)\ce \frac{1}{2}\frac{d}{d\omega}\int_{\R^d} \phi_{\omega}(x)^2dx\quad\text{for}~\omega>0,
\end{align*}
provided the suitable spectral conditions of linearized operators. 
Different from the pure power case, the double power nonlinearities 
destroy the scaling symmetry of the equation, which yields delicate problems to investigate the sign of $M'(\omega)$. It is known as an interesting phenomena in double power nonlinearities that the stability properties may change for the frequency $\omega$ even if $(p,q)$ is fixed (see \cite{O95}).

For the cases \ref{fd}, \ref{ff} in the one-dimensional case, the stability properties are completely determined in \cite{IK93, O95, M08}.
Therefore, we consider the case \ref{df} in $d=1$ here. We note that in the case \ref{df} there exists the standing waves with zero frequency $\omega=0$
as well as $\omega>0$.
By scalar multiplication and the scaling, we may always take 
$a=-1$ and $b=1$ as 
\begin{align}
\label{eq:1.1}
&i\del_t u +\del_x^2 u -|u|^{p-1}u +|u|^{q-1}u=0, \quad (t,x)\in \R\times\R ,~1<p<q.
\end{align}
In this paper we study the stability properties of standing waves $e^{i\omega t}\phi_{\omega}$ of \eqref{eq:1.1}. 
The definition of stability/instability of standing waves is given as follows.
\begin{definition*}
We say that the standing wave $e^{i\omega t}\phi_{\omega}$ of \eqref{eq:1.1} is (orbitally) \emph{stable} if for any $\eps>0$ there exists $\delta>0$ such that the following statement holds:
If $u_0\in H^1(\R)$ satisfies $\|u_0-\phi_{\omega}\|_{H^1}<\delta$, 
then the solution $u(t)$ of \eqref{eq:1.1} exists globally in time and satisfies 
\[
\sup_{t\in\R}\inf_{(\theta,y)\in\R\x\R}\|u(t)-e^{i\theta}\phi_{\omega}(\cdot-y)\|_{H^1}<\eps.
\]
Otherwise, we say that it is (orbitally) \emph{unstable}.
\end{definition*}
In one-dimensional case, the explicit integral formula $M'(\omega)$ was calculated by Iliev and Kirchev \cite{IK93}. Based on the formula in \cite{IK93}, Ohta~\cite{O95} studied stability properties for the case of double power nonlinearities, and proved the following result for \eqref{eq:1.1}:
\begin{itemize}
\item
When $q\ge 5$, then the standing wave $e^{i\omega t}\phi_\omega$ is unstable for all $\omega>0$.
\item
When $q<5$, there exists $\omega_0>0$ such that the standing wave $e^{i\omega t}\phi_\omega$ is stable for $\omega>\omega_0$.
 Assuming further $p+q>6$, then there exists $\omega_1\in (0,\omega_0)$ such that the standing wave $e^{i\omega t}\phi_\omega$ is unstable for $\omega\in (0,\omega_1)$.
\end{itemize}
In short, the stability properties change for the frequency when $q<5$.
Later, Maeda~\cite{M08} improved this result and bridged a gap between $\omega_0$ and $\omega_1$ if $p\ge\frac{7}{3}$, where the assumption of $p$ comes from certain monotonicity of $M''(\omega)$.  
Recently, it was proved in \cite{FH20} that if $\frac{23-3p}{3+p}<q<5$, there exists $\omega_2>0$ that the standing wave is unstable for $\omega\in[0,\omega_2]$.\footnote{Analogous instability results in higher dimensions are also obtained in \cite{FH20}.} We note that if $p\in(1,5)$, then
\begin{align*}
\gamma_1(p)\ce\frac{23-3p}{3+p}<-p+6,
\end{align*}
which yields that the condition $q>\gamma_1(p)$ improves the condition $p+q>6$. On the other hand, it was proved in \cite{O95} that when $p=2,q=3$, the standing wave $e^{i\omega t}\phi_{\omega}$ is stable for all $\omega>0$. Therefore, some condition is necessary to prove the instability for small frequencies when $q<5$, but it has not been known what the sharp condition is.

We recall that the condition $q>\gamma_1(p)$ in \cite{FH20} is characterized in terms of standing waves with zero frequency as
\begin{align*}
q>\gamma_1(p)\iff \l.\del^2_{\lambda}E(\lambda^{\frac{1}{2}}\phi_0(\lambda\cdot)\r|_{\lambda=1}<0.
\end{align*}
Therefore, one can say that this condition was obtained for the first time to focus on the standing wave with zero frequency. 
In the spirit of this observation, we calculate the zero frequency limit of $M'(\omega)$ and derive the sharp condition yielding the instability for small frequencies. 
Although our results in this paper are restricted in one space dimension, 
we can completely calculate the explicit formula of $\lim_{\omega\to0}M'(\omega)$, which is independent of interest.

We now state our main results.
\begin{theorem}
\label{thm:1.1}
Let $q<5$. Then we have
\begin{align}
\label{eq:1.2}
\lim_{\omega\to0}M'(\omega)
=\l\{
\begin{aligned}
&-\infty, &&\text{if}~p\ge\frac{7}{3},
\\
&c_{p,q}\frac{\Gamma\l(\frac{7-3p}{2(q-p)}\r)}{\Gamma\l(\frac{7-2p-q}{2(q-p)}\r)},
&&\text{if}~p<\frac{7}{3},
\end{aligned}
\r.
\end{align}
where $c_{p,q}$ is a positive constant, which is explicitly written as $c_{p,q}=\sqrt{2\pi}\l(\frac{q+1}{p+1}\r)^{\frac{7-3p}{2(q-p)}}\frac{(p+1)^{\frac{3}{2}}}{q-p}$.
\end{theorem}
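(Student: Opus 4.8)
My plan is to start from the ODE satisfied by the positive, even profile $\phi_\omega$, which is decreasing for $x>0$. Multiplying the profile equation $\del_x^2\phi_\omega=\omega\phi_\omega+\phi_\omega^p-\phi_\omega^q$ by $\phi_\omega'$ and integrating from $x$ to $+\infty$ gives the first integral $(\phi_\omega')^2=V_\omega(\phi_\omega)$ with
\[
V_\omega(\phi)\ce\omega\phi^2+\frac{2}{p+1}\phi^{p+1}-\frac{2}{q+1}\phi^{q+1}.
\]
Writing $M_\omega\ce\phi_\omega(0)=\max\phi_\omega$, the vanishing $V_\omega(M_\omega)=0$ gives the amplitude relation $\omega=\frac{2}{q+1}M_\omega^{q-1}-\frac{2}{p+1}M_\omega^{p-1}$ and the quadrature $\|\phi_\omega\|_{L^2}^2=2\int_0^{M_\omega}\phi^2 V_\omega(\phi)^{-1/2}\,d\phi$ (the Iliev--Kirchev representation). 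Rescaling $\phi=M_\omega s$ fixes the interval to $[0,1]$ and, after eliminating $\omega$ through the amplitude relation, yields $\|\phi_\omega\|_{L^2}^2=2M_\omega^3\int_0^1 s^2\,\til V(s,M_\omega)^{-1/2}\,ds$ where
\[
\til V(s,M)=\frac{2}{q+1}M^{q+1}(s^2-s^{q+1})-\frac{2}{p+1}M^{p+1}(s^2-s^{p+1}).
\]

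Next I would change the independent variable from $\omega$ to the amplitude $M=M_\omega$. The amplitude relation shows $M\mapsto\omega$ is smooth and strictly increasing near the zero-frequency amplitude $M_0\ce(\frac{q+1}{p+1})^{1/(q-p)}$, with $\frac{d\omega}{dM}\to\frac{2(q-p)}{p+1}M_0^{p-2}>0$; hence $\lim_{\omega\to0}M'(\omega)=\frac12\lim_{M\to M_0}\frac{dN/dM}{d\omega/dM}$ where $N(M)\ce\|\phi_\omega\|_{L^2}^2$, and the whole problem reduces to the behaviour of $dN/dM$ as $M\downarrow M_0$. Differentiating the rescaled integral gives
\[
\frac{dN}{dM}=6M^2\int_0^1 s^2\,\til V^{-1/2}\,ds-M^3\int_0^1 s^2\,\partial_M\til V\,\til V^{-3/2}\,ds.
\]
The upper endpoint $s=1$ is harmless, since $\til V(1,M)\equiv0$ forces $\til V$ and $\partial_M\til V$ to vanish linearly there, so the second integrand is $O((1-s)^{-1/2})$. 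The only possible singular behaviour as $M\downarrow M_0$ comes from the origin $s=0$, because the coefficient of $s^2$ in $\til V$ is exactly $\omega M^2$, which degenerates as $\omega\to0$.

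The decisive point is therefore a near-origin analysis. At $\omega=0$ one has $\til V(s,M_0)=\frac{2}{p+1}M_0^{p+1}s^{p+1}(1-s^{q-p})$, while $\partial_M\til V|_{M_0}$ has leading behaviour $(2M_0^q-2M_0^p)s^2>0$ near $s=0$ (here $M_0>1$); hence the second integrand behaves like $s^{(5-3p)/2}$ as $s\to0$, which is integrable precisely when $p<\frac73$. For $p<\frac73$ I would produce a majorant uniform for small $\omega>0$ --- splitting the $s$-integral at the crossover scale $s\sim\omega^{1/(p-1)}$ where $\omega M^2s^2$ and $s^{p+1}$ balance --- and pass to the limit by dominated convergence, so that $\lim_{\omega\to0}M'(\omega)$ equals the $\omega=0$ value. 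The substitution $t=s^{q-p}$ then turns every monomial contribution into a Beta integral $\frac1{q-p}\int_0^1 t^{\alpha-1}(1-t)^{\beta-1}dt$: the first integral produces $\beta=\frac12$, the three terms of the second produce $\beta=-\frac12$ (legitimate by analytic continuation, as their combination is genuinely integrable at $s=1$). Using $\Gamma$-recursions one checks that all contributions carrying the exponent $\frac{5-p}{2(q-p)}$ cancel, and only the leading term, with $\alpha=\frac{7-3p}{2(q-p)}$ and $\alpha-\frac12=\frac{7-2p-q}{2(q-p)}$, survives; this collapses everything to the claimed ratio $\Gamma(\frac{7-3p}{2(q-p)})/\Gamma(\frac{7-2p-q}{2(q-p)})$ together with the explicit positive constant $c_{p,q}$. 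For $p\ge\frac73$ the near-origin integral $\int_0^1 s^2\partial_M\til V\,\til V^{-3/2}ds$ instead blows up as $\omega\to0$: its integrand is nonnegative and the balance computation gives a contribution $\sim\omega^{(7-3p)/(2(p-1))}\to+\infty$ (logarithmic at $p=\frac73$); since this term enters $dN/dM$ with a negative sign while $d\omega/dM$ stays positive and finite, we conclude $M'(\omega)\to-\infty$.

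I expect the main obstacle to be the uniform control of the tail as $\omega\to0^+$, which is exactly where integrability is being lost: the dominated-convergence majorant in the case $p<\frac73$ and the divergence rate in the case $p\ge\frac73$ both require the crossover-scale splitting and careful matched estimates near $s=0$. A secondary, more bookkeeping-type difficulty is the evaluation of the $\beta=-\frac12$ Beta integrals: the individual terms diverge at $s=1$ and only their particular combination converges, so one must exploit the built-in cancellation (or integrate by parts to lower $(1-s^{q-p})^{-3/2}$ to $(1-s^{q-p})^{-1/2}$) in order to land cleanly on the single $\Gamma$-ratio.
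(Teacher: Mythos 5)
Your proposal is correct and, at the structural level, it is the same route as the paper's: an exact quadrature representation of $\|\phi_\omega\|_{L^2}^2$ (you re-derive what the paper simply imports as the Iliev--Kirchev formula, Lemma \ref{lem:2.1}, and you parametrize by the amplitude $M$ rather than by $h=M^2$, which changes nothing essential), identification of the origin $s=0$ as the only place where integrability degenerates as $\omega\to0$, the threshold $p=\tfrac73$ from the exponent count there, and an evaluation of the limiting integral by Beta/Gamma identities hinging on a cancellation. The genuinely different ingredient is the finishing computation. The paper substitutes $t=s^{\beta-\alpha}$, Taylor-expands $1-t^{\gamma}$ about $t=1$, resums into Gauss hypergeometric values $F(a,b,c;1)$, and removes the unwanted terms via the contiguous relation $(a+b-c)F(a,b,c;1)+(c-a)F(a-1,b,c;1)=0$. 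You instead split the integrand into monomials, assign each the regularized value $B(x,-\tfrac12)=\Gamma(x)\Gamma(-\tfrac12)/\Gamma(x-\tfrac12)$, and cancel in pairs; your claimed cancellation is indeed correct, since with $x=\tfrac{5-p}{2(q-p)}$ one has $x-\tfrac12=\tfrac{5-q}{2(q-p)}$, hence $B(x+1,-\tfrac12)/B(x,-\tfrac12)=x/(x-\tfrac12)=(5-p)/(5-q)$ and $\tfrac{5-p}{2}B(x,-\tfrac12)-\tfrac{5-q}{2}B(x+1,-\tfrac12)=0$, leaving exactly the term with parameter $\tfrac{7-3p}{2(q-p)}$ and the stated $\Gamma$-ratio. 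Your route is arguably more elementary than the hypergeometric one, but the regularization must be justified: each $\int_0^1 t^{x-1}(1-t)^{-3/2}\,dt$ diverges individually, and the finite-part assignment is legitimate here only because the numerator vanishes at $t=1$ (the $O(\eta^{-1/2})$ cut-off divergences carry coefficients summing to $(\alpha-\beta)+(2-\alpha)-(2-\beta)=0$); the single integration by parts you mention parenthetically is the clean way to make this rigorous. Finally, both you and the paper are terse about interchanging the limit $\omega\to0$ with the integral (the paper says only ``a direct calculation shows''); your crossover-scale majorant for dominated convergence when $p<\tfrac73$, and the Fatou-type argument from the nonnegativity of $\partial_M\widetilde V$ near $s=0$ (valid since $M_0>1$) when $p\ge\tfrac73$, are sound ways to supply what is missing.
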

As an application of Theorem \ref{thm:1.1}, we have the following sharp stability results.
\begin{theorem}
\label{thm:1.2}
Let $q<5$. Then the following statements hold.
\begin{enumerate}
\item If $p\geq\frac{7}{3}$, there exists $\omega_*>0$ such that $e^{i\omega t}\phi_{\omega}$ is unstable if $\omega\in(0,\omega_*]$ and stable if $\omega>\omega_*$.

\item Assume $p<\frac{7}{3}$. If $2p+q>7$, there exists $\mu_1\in(0,\omega_0)$ such that $e^{i\omega t}\phi_{\omega}$ is unstable if $\omega\in(0,\mu_1)$. If $2p+q\le7$, there exists $\mu_2\in(0,\omega_0)$ such that $e^{i\omega t}\phi_{\omega}$ is stable $\omega\in(0,\mu_2)$. 
\end{enumerate}
\end{theorem}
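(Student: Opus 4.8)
The plan is to convert both parts into statements about the sign of $M'(\omega)$ through the Grillakis--Shatah--Strauss theory \cite{GSS87}: once the spectral hypotheses (a single negative eigenvalue of the real part of the linearization at $\phi_\omega$, together with nonnegativity and simple kernel of the imaginary part), already verified for \eqref{eq:1.1} in \cite{O95,M08}, are in force, $e^{i\omega t}\phi_\omega$ is stable when $M'(\omega)>0$ and unstable when $M'(\omega)<0$. Everything then reduces to locating the sign of $M'$ near $\omega=0$, which is exactly the content of Theorem \ref{thm:1.1}, together with control of the global sign changes of $M'$.

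For (i), suppose $p\ge\frac{7}{3}$. Theorem \ref{thm:1.1} gives $M'(\omega)\to-\infty$, hence $M'(\omega)<0$ for all small $\omega>0$ and instability there; Ohta's theorem supplies $\omega_0$ with $M'(\omega)>0$, hence stability, for $\omega>\omega_0$. To see that the sign changes exactly once I would invoke the monotonicity of $M''$ proved by Maeda \cite{M08} for $p\ge\frac{7}{3}$, which forces $M'$ to have at most one zero; combined with the two endpoint behaviours and the intermediate value theorem this produces a unique $\omega_*$ with $M'<0$ on $(0,\omega_*)$ and $M'>0$ on $(\omega_*,\infty)$. It remains to treat the degenerate frequency $\omega=\omega_*$, where $M'(\omega_*)=0$ but $M''(\omega_*)>0$; here I would apply the instability criterion for the degenerate case to conclude that $e^{i\omega_* t}\phi_{\omega_*}$ is unstable, upgrading instability to the closed interval $(0,\omega_*]$.

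For (ii), suppose $p<\frac{7}{3}$ and set $\beta\ce\frac{7-3p}{2(q-p)}>0$, so that the denominator argument in Theorem \ref{thm:1.1} equals $\beta-\tfrac{1}{2}$. Since $c_{p,q}>0$ and $\Gamma(\beta)>0$, the sign of $\lim_{\omega\to0}M'(\omega)$ is that of $\Gamma(\beta-\tfrac{1}{2})$. If $2p+q>7$ then $\beta<\tfrac{1}{2}$, so $\beta-\tfrac{1}{2}\in(-\tfrac{1}{2},0)$ where $\Gamma<0$; hence $\lim_{\omega\to0}M'(\omega)<0$, giving $M'(\omega)<0$ and instability on some $(0,\mu_1)$. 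If $2p+q<7$ then $\beta>\tfrac{1}{2}$, so $\Gamma(\beta-\tfrac{1}{2})>0$ and $\lim_{\omega\to0}M'(\omega)>0$, giving stability on some $(0,\mu_2)$.

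The delicate point is the borderline $2p+q=7$ (which contains the known stable case $p=2,\,q=3$). There $\beta=\tfrac{1}{2}$, the factor $\Gamma(\beta-\tfrac{1}{2})$ has a pole, and $\lim_{\omega\to0}M'(\omega)=0$, so the leading limit no longer detects the sign of $M'$ for small $\omega>0$. To place this line on the stable side, matching $2p+q\le7$, I would sharpen the small-$\omega$ analysis of the integral formula for $M'(\omega)$ one order beyond the vanishing constant term---equivalently, establish local monotonicity of $M'$ near $0$---and show $M'(\omega)\to0^{+}$, so that $M'(\omega)>0$ on some $(0,\mu_2)$. This refined expansion, rather than the reduction to $M'$ or the bookkeeping of the Gamma signs, is where the real work lies.
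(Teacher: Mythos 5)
Your overall reduction is the one the paper uses: Lemma \ref{lem:3.3} (the \cite{GSS87} criterion), the sign of $\lim_{\omega\to 0}M'(\omega)$ from Theorem \ref{thm:1.1}/Corollary \ref{cor:2.3}, a single-sign-change argument for part (i), and the degenerate instability theory of \cite{CP03,O11,M12} at $\omega=\omega_*$. Your Gamma-sign bookkeeping in part (ii) is correct ($\frac{7-2p-q}{2(q-p)}$ lies in $(-\tfrac12,0)$ when $2p+q>7$, is positive when $2p+q<7$, and hits the pole at $0$ when $2p+q=7$). One small imprecision in (i): monotonicity of $M''$ does not by itself bound the number of zeros of $M'$; what is actually needed (and what the paper proves via Lemma \ref{lem:3.2}\,\ref{73p}, i.e.\ $s_0\le t_1$ for $p\ge\tfrac73$) is that $F_0'>0$ on the whole interval $(h_0,s_0)$ where $F_0$ could vanish, combined with $F_0>0$ on $[s_0,\infty)$ from \eqref{eq:3.8}. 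Since this is exactly the structure established in \cite{M08}, the citation carries the argument, but the mechanism should be stated as positivity of $M''$ (equivalently of $F_0'$) on the relevant range, not monotonicity of $M''$.

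The genuine gap is the borderline case $2p+q=7$ in part (ii), which is part of the asserted statement ($2p+q\le 7$) and which you explicitly leave as a plan (``I would sharpen the small-$\omega$ analysis \dots and show $M'(\omega)\to 0^{+}$''). Identifying that local monotonicity of $M'$ near $0$ suffices is correct, but you give no mechanism for proving it, and a naive higher-order expansion of the integral formula at $h=h_0$ is delicate (the integrand is singular there). The paper closes this case without any expansion: by \eqref{eq:3.3} and \eqref{eq:3.6}, $\sgn F_0'(h)=\sgn F_1(h)$, and $F_1(h)>0$ whenever $h$ is below the extremal point $t_1$ of $K_1$, because then $K_1$ is increasing on $(0,h)$ and the integrand in \eqref{eq:3.6} is pointwise positive. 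Lemma \ref{lem:3.2}\,\ref{h0t1} gives $h_0<t_1\iff q>-3p+8$, and on the line $q=7-2p$ this reads $7-2p>-3p+8\iff p>1$, which always holds. Hence $F_0'>0$ on $(h_0,t_1)$, and since $F_0(h_0+0)=0$ by Corollary \ref{cor:2.3}, we get $F_0>0$, i.e.\ $M'>0$, on a right neighborhood of $\omega=0$. This is the step your proposal is missing; without it the stability claim for $2p+q=7$ (which includes the known stable example $p=2$, $q=3$) is unproved.
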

In Theorem \ref{thm:1.2}, there are gaps between $\mu_1,\mu_2$ and $\omega_0$. Inspired from the work of \cite{M08}, we bridge the gaps as follows.
\begin{theorem}
\label{thm:1.3}
Let $q<5$ and $\frac{9}{5}\le p<\frac{7}{3}$. Then, the following statements hold.
\begin{enumerate}
\item If $2p+q> 7$, there exists $\omega_*>0$ such that $e^{i\omega t}\phi_{\omega}$ is unstable if $\omega\in(0,\omega_*]$ and stable if $\omega>\omega_*$.

\item If $2p+q\le7$, $e^{i\omega t}\phi_{\omega}$ is stable for all $\omega>0$.
\end{enumerate}
\end{theorem}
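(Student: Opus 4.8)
My plan is to reduce Theorem \ref{thm:1.3} entirely to the sign of $M'(\omega)$ and to prove that, in the range $\frac{9}{5}\le p<\frac{7}{3}$, this sign changes at most once on $(0,\infty)$. By the general theory of \cite{GSS87} (the one-dimensional Vakhitov--Kolokolov criterion for these ground states), $e^{i\omega t}\phi_\omega$ is stable when $M'(\omega)>0$ and unstable when $M'(\omega)<0$. The behavior at the two ends of the frequency axis is already known: Theorem \ref{thm:1.2} gives the sign of $M'$ for small $\omega$ (negative when $2p+q>7$, positive when $2p+q\le7$), and the large-frequency analysis of \cite{O95} gives stability for $\omega>\omega_0$, i.e. $M'(\omega)>0$ there. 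Hence the whole content of the theorem is to rule out extra sign changes of $M'$ on the intermediate gap.

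The decisive ingredient, in the spirit of \cite{M08}, is the conditional claim that \emph{every zero of $M'$ is a strict up-crossing}: if $M'(\omega_*)=0$ for some $\omega_*>0$, then $M''(\omega_*)>0$. Granting this, $M'$ has at most one zero, for if $\omega_1<\omega_2$ were two zeros then $M'$ would be positive just to the right of $\omega_1$ and would have to return to $0$ at $\omega_2$ from above, forcing $M''(\omega_2)\le0$, a contradiction. In case (i), where $M'<0$ near $0$ and $M'>0$ near $\infty$, the intermediate value theorem then produces exactly one zero $\omega_*$, with $M'<0$ on $(0,\omega_*)$ and $M'>0$ on $(\omega_*,\infty)$; stability for $\omega>\omega_*$ and instability for $\omega\in(0,\omega_*)$ follow from \cite{GSS87}, while instability at the degenerate frequency $\omega=\omega_*$ itself follows from the degenerate instability criterion of Comech--Pelinovsky, which applies precisely because $M''(\omega_*)>0\ne0$. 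In case (ii), where $M'>0$ both near $0$ and near $\infty$, a smallest zero $\omega_1>0$ would force $M'<0$ immediately to its left, contradicting positivity near $0$; hence $M'>0$ throughout and stability holds for all $\omega>0$.

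It remains to establish the conditional sign statement, which is the main obstacle. I would start from the Iliev--Kirchev representation \cite{IK93}: the profile equation $\phi''=\omega\phi+\phi^p-\phi^q$ has the first integral
\[
(\phi')^2=\omega\phi^2+\tfrac{2}{p+1}\phi^{p+1}-\tfrac{2}{q+1}\phi^{q+1},
\]
and the change of variables $x\mapsto\phi$ turns the charge into the explicit amplitude integral
\[
M(\omega)=\int_0^{\Phi(\omega)}\frac{\phi^2\,d\phi}{\sqrt{\omega\phi^2+\tfrac{2}{p+1}\phi^{p+1}-\tfrac{2}{q+1}\phi^{q+1}}},
\]
where $\Phi(\omega)$ is the (monotone in $\omega$) soliton amplitude; differentiation yields analogous integral formulas for $M'(\omega)$ and $M''(\omega)$. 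The goal is to show that, on the locus $M'(\omega_*)=0$, the sign of $M''(\omega_*)$ is forced to be positive. Passing to the amplitude variable $\Phi$ and rescaling reduces this to a one-variable integral inequality of hypergeometric (Beta-function) type, in which the exact evaluation of $\lim_{\omega\to0}M'(\omega)$ from Theorem \ref{thm:1.1} provides the crucial quantitative control; the lower bound $p\ge\frac{9}{5}$ is exactly the threshold making this inequality valid. The difficulty relative to \cite{M08}, which assumed $p\ge\frac{7}{3}$ and used a clean global monotonicity of $M''$, is that here $M''$ need not have a fixed sign, so only the weaker conditional statement is available and it must be extracted by delicate estimates of the associated special-function integrals; the borderline case $2p+q=7$, where $\lim_{\omega\to0}M'(\omega)=0$, additionally requires the finer small-$\omega$ expansion already used in Theorem \ref{thm:1.2} to fix the sign near the origin.
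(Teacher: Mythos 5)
Your reduction of the theorem to the sign of $M'$ and your endpoint analysis are sound, and your proposed key lemma --- every zero of $M'$ is a strict up-crossing, i.e.\ $M'(\omega_*)=0\Rightarrow M''(\omega_*)>0$ --- would indeed yield both parts of the theorem by the argument you give; it is, in effect, what Proposition \ref{prop:3.8} delivers. The problem is that you do not prove this lemma, and the plan you sketch for it does not engage the hypothesis $\frac{9}{5}\le p<\frac{7}{3}$ in the way it actually functions. You propose to reduce the up-crossing property to ``a one-variable integral inequality of hypergeometric type'' in which ``the exact evaluation of $\lim_{\omega\to0}M'(\omega)$ provides the crucial quantitative control''; but the zero-frequency limit is a boundary datum and cannot by itself control the sign of $M''$ at an unknown interior critical point $\omega_*$. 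Nothing in your sketch identifies why $p=\frac{9}{5}$ is the relevant threshold, and ``delicate estimates of the associated special-function integrals'' is not an argument.

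What actually closes the gap is a second differentiation of the Iliev--Kirchev integral. Writing $\sgn M'(\omega)=\sgn F_0(h)$ with $h=h(\omega)$ increasing, one has $\sgn F_0'=\sgn F_1$ and $\sgn F_1'=\sgn F_2$ (Lemma \ref{lem:3.1}), where $F_2$ is an explicit integral whose integrand involves $K_2(h)-K_2(s)$. The hypothesis $p\ge\frac{9}{5}$ enters through the purely algebraic equivalence $s_0\le t_2\iff p\ge\frac{9}{5}$ (Lemma \ref{lem:3.2}), which makes $K_2$ decreasing on $(0,s_0)$ and hence forces $F_2<0$ on the whole interval $(h_0,s_0)$, i.e.\ $F_1$ is strictly decreasing there. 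Since $F_1(h_0+0)=+\infty$ (Lemma \ref{lem:3.5}, which uses $p\ge\frac{9}{5}$ again), $F_0'$ changes sign at most once on $(h_0,s_0)$, from positive to negative; combined with the unconditional fact that $F_0>0$ on $[s_0,\infty)$ from \eqref{eq:3.8}, this yields the single-sign-change structure of $F_0$ (Lemmas \ref{lem:3.6} and \ref{lem:3.7}). Note also that the paper never establishes your lemma on all of $(0,\infty)$: for $h\ge s_0$ it shows there are no zeros at all rather than that any zero would be an up-crossing, and the positivity of $M''$ at the unique zero is deduced a posteriori from this global structure rather than proved pointwise at an arbitrary critical point. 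As written, your proposal is missing the one idea that the restriction $\frac{9}{5}\le p<\frac{7}{3}$ exists to supply.
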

The condition $\frac{9}{5}\le p$ comes from certain monotonicity of $M^{(3)}(\omega)$. We think that this is a technical assumption and the conclusion should hold without the restriction $\frac{9}{5}\le p$, but we do not pursue this issue further here. 

The rest of this paper is organized as follows.
In Section \ref{sec:2} we calculate the zero frequency limit of $M'(\omega)$ and Theorem \ref{thm:1.1}. In Section \ref{sec:3.1} we organize the derivatives of $M(\omega)$ and fundamental properties of zeros and extremal points of these functions. We also give a quick review on previous results \cite{O95, M08} (see Lemma \ref{lem:3.2} below).  
In Sections \ref{sec:3.2} and \ref{sec:3.3}, we prove Theorems \ref{thm:1.2} and \ref{thm:1.3}, respectively. More specifically, based on the results in Section \ref{sec:3.1}, we apply Theorem \ref{thm:1.1} and general theory of \cite{GSS87} to investigate the stability properties of standing waves. 

\section{Zero frequency limit}
\label{sec:2}


We use the integration formula of $M'(\omega)$ by Iliev and Kirchev \cite{IK93}.
\begin{lemma}[\cite{IK93}]
\label{lem:2.1}
For $\omega>0$, we obtain the following formula:
\begin{align}
\label{eq:2.1}
M'(\omega)
=-\frac{1}{4W'(h)}\int_{0}^{h}\frac{K(h)-K(s)}{\l( L(h)-L(s) \r)^{3/2}}ds,
\end{align}
where the functions are defined by
\begin{align*}
K(s)&\ce -\frac{5-p}{p+1}s^{\frac{p-1}{2}}+\frac{5-q}{q+1}s^{\frac{q-1}{2}},\\
L(s)&\ce-\frac{2}{p+1}s^{\frac{p-1}{2}}+\frac{2}{q+1}s^{\frac{q-1}{2}},\\
W(s)&=W(s;\omega)\ce\omega s-L(s)s,
\end{align*}
and $h=h(\omega)$ is a unique positive zero of $W(s;\omega)$.
\end{lemma}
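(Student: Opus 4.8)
The plan is to reduce the computation of $M(\omega)=\frac12\|\phi_\omega\|_{L^2}^2$ to a one-dimensional quadrature and then differentiate the resulting integral in $\omega$; the only genuine difficulty is that the upper endpoint $h=h(\omega)$ moves with $\omega$ and sits exactly at the (integrable) singularity of the integrand. First I would record the profile equation: writing $u=e^{i\omega t}\phi_\omega$ with $\phi_\omega>0$ even and decaying, the profile solves $-\del_x^2\phi_\omega+\omega\phi_\omega+\phi_\omega^{p}-\phi_\omega^{q}=0$. Multiplying by $\del_x\phi_\omega$, integrating, and using $\phi_\omega,\del_x\phi_\omega\to0$ at infinity gives the first integral
\begin{align*}
(\del_x\phi_\omega)^2=\omega\phi_\omega^2+\tfrac{2}{p+1}\phi_\omega^{p+1}-\tfrac{2}{q+1}\phi_\omega^{q+1}=W(\phi_\omega^2;\omega),
\end{align*}
where the last equality uses $W(s;\omega)=s(\omega-L(s))$. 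Hence $h\ce\|\phi_\omega\|_{L^\infty}^2$ is the unique positive zero of $W(\cdot\,;\omega)$ (the homoclinic orbit has $W>0$ on $(0,h)$), and $\omega=L(h)$. Exploiting evenness and changing variables $x\mapsto s=\phi_\omega^2$ on the decaying side then turns $M(\omega)$ into
\begin{align*}
M(\omega)=\frac12\int_0^h\frac{\sqrt{s}}{\sqrt{W(s;\omega)}}\,ds=\frac12\int_0^h\frac{ds}{\sqrt{\,L(h)-L(s)\,}}.
\end{align*}

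Next I would differentiate in $\omega$. A direct Leibniz rule fails: the boundary term carries $(L(h)-L(s))^{-1/2}$ evaluated at $s=h$, which is infinite, while the differentiated integrand carries $(L(h)-L(s))^{-3/2}$, which is not integrable; both pieces diverge individually. To bypass this I would rescale $s=h\tau$ to the fixed domain $[0,1]$, writing $M=\frac{h}{2}\int_0^1(L(h)-L(h\tau))^{-1/2}\,d\tau$ and regarding $M$ as a function of $h$ alone through $\omega=L(h)$. Differentiation under the integral sign is now legitimate: the resulting integrand is proportional to $(L(h)-L(h\tau))^{-3/2}\bigl(L'(h)-\tau L'(h\tau)\bigr)$, and the numerator $L'(h)-\tau L'(h\tau)$ vanishes to first order as $\tau\to1$, leaving only an integrable $(1-\tau)^{-1/2}$ singularity. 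I would make the interchange rigorous by cutting off at $\tau=1-\eps$ and showing the boundary contributions vanish as $\eps\to0$. Returning to the $s$ variable, this yields
\begin{align*}
\frac{dM}{dh}=\frac{M}{h}-\frac{1}{4h}\int_0^h\frac{hL'(h)-sL'(s)}{\bigl(L(h)-L(s)\bigr)^{3/2}}\,ds.
\end{align*}

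Finally I would package the answer using two algebraic identities. Matching the coefficients of $s^{(p-1)/2}$ and $s^{(q-1)/2}$ shows $K(s)=2L(s)-sL'(s)$, hence $hL'(h)-sL'(s)=2\bigl(L(h)-L(s)\bigr)-\bigl(K(h)-K(s)\bigr)$. The piece $2\bigl(L(h)-L(s)\bigr)$ in the numerator integrates to $2\int_0^h(L(h)-L(s))^{-1/2}\,ds=4M$, which after the prefactor $-\frac{1}{4h}$ exactly cancels the leading term $\frac{M}{h}$, leaving $\frac{dM}{dh}=\frac{1}{4h}\int_0^h\frac{K(h)-K(s)}{(L(h)-L(s))^{3/2}}\,ds$. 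Dividing by $L'(h)=\frac{d\omega}{dh}$ and using $W'(h)=\omega-L(h)-hL'(h)=-hL'(h)$ (since $\omega=L(h)$) converts the prefactor $\frac{1}{4hL'(h)}$ into $-\frac{1}{4W'(h)}$ and gives \eqref{eq:2.1}. The main obstacle is the middle step: legitimizing differentiation across the moving singular endpoint and exhibiting the cancellation that lowers the apparent $(\cdot)^{-3/2}$ divergence to the integrable form recorded above.
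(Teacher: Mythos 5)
The paper does not prove this lemma; it is imported verbatim from Iliev--Kirchev \cite{IK93}, so there is no in-paper argument to compare against. Your derivation is correct and self-contained, and it is the natural route: the first integral $(\del_x\phi_\omega)^2=W(\phi_\omega^2;\omega)$ and the quadrature $M(\omega)=\frac12\int_0^h(L(h)-L(s))^{-1/2}\,ds$ are right (I checked that $h=\|\phi_\omega\|_{L^\infty}^2$ is the zero of $W$, that $W'(h)=-hL'(h)<0$, and that the coefficient identity $K(s)=2L(s)-sL'(s)$ holds, i.e.\ $(2-\alpha)d_1=c_1$ and $(2-\beta)d_2=c_2$). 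The two genuinely delicate points are both handled correctly: the rescaling $s=h\tau$ to a fixed domain before differentiating, with the observation that $L'(h)-\tau L'(h\tau)$ vanishes to first order at $\tau=1$ so the differentiated integrand has only a $(1-\tau)^{-1/2}$ singularity (a dominated-convergence bound uniform for $h$ in compact subsets of $(h_0,\infty)$ makes this fully rigorous, since $L(h)-L(h\tau)\ge c(1-\tau)$ there); and the splitting $hL'(h)-sL'(s)=2(L(h)-L(s))-(K(h)-K(s))$, whose first piece integrates to $4M$ and exactly cancels the boundary term $M/h$. Note that each piece of the split integrand is separately integrable, so the decomposition is legitimate, and the final conversion $1/(4hL'(h))=-1/(4W'(h))$ gives \eqref{eq:2.1}. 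No gaps.
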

For each $\omega >0$ $h(\omega)$ satisfies 
\begin{align*}
W(h(\omega);\omega)=0,~W'(h(\omega); \omega)<0.
\end{align*}
Therefore, an implicit function theorem yields that $\omega\mapsto h(\omega)$ is a smooth function. We set $h_0\ce h(0)$, which is a positive zero of $L(s)$. We note that
\begin{align*}
W(h(\omega); \omega)=0\iff \omega =L(h(\omega)) \quad\text{for}~\omega\ge0.
\end{align*}
This yields that $\omega \mapsto h(\omega)$ is strictly increasing.


We use parts of notation from \cite{O95, M08} as follows.
\begin{align*}
&c_1=-\frac{5-p}{p+1}, ~c_2=\frac{5-q}{q+1},
~d_1=-\frac{2}{p+1}, ~d_2=\frac{2}{q+1},
\\
&\alpha =\frac{p-1}{2}, ~\beta=\frac{q-1}{2}.
\end{align*}
Then, $K(s)$ and $L(s)$ are rewritten by
\begin{align*}
K(s)&:= c_1s^{\alpha}+c_2s^{\beta},\\
L(s)&:=d_1s^{\alpha}+d_2s^{\beta}.
\end{align*}
We note that $c_1,d_1 <0$ and $c_2, d_2>0$ if $q<5$. Since $h_0$ is a zero of $L(s)$, we have 
\begin{align}
\label{eq:2.2}
h_0^{\beta-\alpha}=-\frac{d_1}{d_2}=\frac{q+1}{p+1}.
\end{align}
We set
\begin{align*}
F(h)\ce\int_{0}^{h}\frac{K(h)-K(s)}{\l( L(h)-L(s) \r)^{3/2}}ds.
\end{align*}
Since $W'(h)<0$, the sign of $M'(\omega)$ coincides with the one of $F(h)$.
Change the variable $s\mapsto hs$, we have
\begin{align*}
F(h)&=h\int_{0}^1 \frac{K(h)-K(hs)}{\l( L(h)-L(hs) \r)^{3/2}}ds
\\
&=h^{1-\frac{\alpha}{2}}\int_{0}^{1}\frac{ c_1(1-s^{\alpha})+c_2(1-s^{\beta})h^{\beta -\alpha}  }{\l( d_1(1-s^{\alpha})+d_2(1-s^{\beta})h^{\beta -\alpha} \r)^{3/2}}ds.
\end{align*}
The zero frequency limit corresponds to the limit $h\to h_0$. 
A direct calculation shows that
\begin{align*}
\lim_{\omega\to0}M'(\omega)&=-\frac{1}{4W'(h_0)}F(h_0)
\\
&=-\frac{h_0^{1-\frac{\alpha}{2}} }{4W'(h_0)}(p+1)^{1/2}2^{-1/2}
\int_0^1
\frac{ -(2-\alpha)(1-s^\alpha)+(2-\beta)(1-s^\beta) }{ (s^\alpha-s^\beta )^{3/2}}
ds.
\end{align*}
%
Here we set 
\begin{align*}
H(\alpha,\beta)\ce 
\int_0^1\frac{ -(2-\alpha)(1-s^\alpha)+(2-\beta)(1-s^\beta) }{ (s^\alpha-s^\beta )^{3/2}}
ds.
\end{align*}
The denominator is rewritten as
\begin{align*}
(s^\alpha-s^\beta )^{3/2}=s^{\frac{3}{2}\alpha}(1-s^{\beta-\alpha})^{3/2},
\end{align*}
Hence the singularity of the origin is like $s^{-\frac{3}{2}\alpha}$.
We note that
$\frac{3}{2}\alpha\ge 1 \iff p\ge\frac{7}{3}$.
Therefore we obtain that
\begin{align}
\label{eq:2.3}
\begin{alignedat}{3}
&p\ge\frac{7}{3}&&\implies &&\lim_{\omega\to0}M'(\omega)=-\infty,
\\[3pt]
&p<\frac{7}{3}&&\implies &&\lim_{\omega\to0}M'(\omega)\in\R.
\end{alignedat}
\end{align}
We now calculate the explicit value of $\lim_{\omega\to0}M'(\omega)$ when $p<\frac{7}{3}$. Theorem \ref{thm:1.1} follows from the following proposition.
\begin{proposition}
\label{prop:2.2}
Let $0<\alpha <\beta <2$ and $0<\alpha <\frac{2}{3}$. Then we have
\begin{align}
\label{eq:2.4}
H(\alpha ,\beta) =2\sqrt{\pi}\,
\frac{ \Gamma\l( \frac{-3\alpha +2}{2(\beta -\alpha)}\r) }
{\Gamma\l( \frac{2-2\alpha-\beta}{2(\beta-\alpha)}\r)}.
\end{align}
\end{proposition}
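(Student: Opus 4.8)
The plan is to realize $H(\alpha,\beta)$ as the value at a single point of an analytic family of Euler integrals, and to evaluate that family in closed form on the half-plane where its defining integral converges absolutely.

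\emph{Reduction.} First I would write the numerator as $N(s)\ce(\alpha-\beta)+(2-\alpha)s^\alpha-(2-\beta)s^\beta$ and factor $(s^\alpha-s^\beta)^{3/2}=s^{3\alpha/2}(1-s^{\beta-\alpha})^{3/2}$, so that
\[
H(\alpha,\beta)=\int_0^1 s^{-3\alpha/2}\,N(s)\,(1-s^{\beta-\alpha})^{-3/2}\,ds.
\]
The one computation I would record at the outset is the vanishing $N(1)=0$ (with $N'(1)=(\alpha-\beta)(2-\alpha-\beta)$). This is the pivot of the whole argument: it forces the integrand to behave like $(1-s)^{-1/2}$ rather than $(1-s)^{-3/2}$ as $s\to1$, so the integral converges at that endpoint, while the hypothesis $\alpha<\tfrac23$ handles $s=0$ (where the integrand is $\sim s^{-3\alpha/2}$).

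\emph{An analytic family.} Next I would introduce, for $\re z>-1$,
\[
F(z)\ce\int_0^1 s^{-3\alpha/2}\,N(s)\,(1-s^{\beta-\alpha})^{z-1}\,ds,
\qquad F\l(-\tfrac12\r)=H(\alpha,\beta).
\]
Because $N(1)=0$, the integrand is $O((1-s)^{\re z})$ near $s=1$ and $O(s^{-3\alpha/2})$ near $s=0$; by dominated convergence (uniformly on compact $z$-sets) $F$ is holomorphic on $\{\re z>-1\}$. On the smaller half-plane $\re z>0$ each of the three monomials in $N$ is separately integrable, and the substitution $u=s^{\beta-\alpha}$ turns each piece into an Euler Beta integral, giving
\[
F(z)=\frac{1}{\beta-\alpha}\Big[(\alpha-\beta)B(x_1,z)+(2-\alpha)B(x_2,z)-(2-\beta)B(x_3,z)\Big],\quad \re z>0,
\]
where $B(x,y)=\Gamma(x)\Gamma(y)/\Gamma(x+y)$ is the Beta function, $x_1\ce\frac{2-3\alpha}{2(\beta-\alpha)}$, $x_2\ce\frac{2-\alpha}{2(\beta-\alpha)}$, and $x_3\ce x_2+1$ (all positive under the hypotheses). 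The right-hand side is a fixed meromorphic function of $z$, regular at $z=-\tfrac12$, so by the identity theorem it agrees with $F$ on all of $\{\re z>-1\}$; in particular $H(\alpha,\beta)=F(-\tfrac12)$ equals that right-hand side at $z=-\tfrac12$.

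\emph{Evaluation.} Finally I would evaluate at $z=-\tfrac12$ using $B(x,-\tfrac12)=\Gamma(-\tfrac12)\,\Gamma(x)/\Gamma(x-\tfrac12)$ and $\Gamma(-\tfrac12)=-2\sqrt\pi$. The shift $x_3=x_2+1$ gives $\Gamma(x_3)/\Gamma(x_3-\tfrac12)=\frac{x_2}{x_2-1/2}\,\Gamma(x_2)/\Gamma(x_2-\tfrac12)$, and since $x_2-\tfrac12=\frac{2-\beta}{2(\beta-\alpha)}$ one checks $(2-\beta)\frac{x_2}{x_2-1/2}=2-\alpha$, so the $x_2$- and $x_3$-terms cancel \emph{exactly}, leaving only the $x_1$-term. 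With $x_1-\tfrac12=\frac{2-2\alpha-\beta}{2(\beta-\alpha)}$ and $\frac{\alpha-\beta}{\beta-\alpha}=-1$, this collapses to $2\sqrt\pi\,\Gamma(x_1)/\Gamma(x_1-\tfrac12)$, which is precisely \eqref{eq:2.4}.

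\emph{Main obstacle.} The delicate point is not the algebra but the passage to $z=-\tfrac12$: the three Beta integrals individually diverge at $s=1$ there, so the closed form holds a priori only for $\re z>0$. The analytic continuation legitimizes evaluating the combination at $z=-\tfrac12$, and its entire force rests on $N(1)=0$, which is exactly what makes the \emph{combined} integrand (hence $F$) holomorphic across $\re z=0$ although the separate pieces are not. A purely real-variable alternative would be to integrate by parts, using $[(2-\alpha)s^\alpha-(2-\beta)s^\beta](s^\alpha-s^\beta)^{-3/2}=\frac{d}{ds}\big[2s(s^\alpha-s^\beta)^{-1/2}\big]$ and a companion identity for the constant term of $N$; this expresses $H$ as a single convergent Beta integral when $\beta+\tfrac{\alpha}{2}<1$, after which one extends to the full range by analyticity in $(\alpha,\beta)$. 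The $z$-continuation above is preferable since it treats all admissible $(\alpha,\beta)$ at once.
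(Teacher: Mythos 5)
Your proposal is correct, and it reaches \eqref{eq:2.4} by a genuinely different route from the paper. The paper substitutes $t=s^{\beta-\alpha}$, Taylor-expands $1-t^{\gamma}$ about $t=1$, integrates term by term to get a series of Beta functions, recognizes the result as a combination of two Gauss hypergeometric values $F(-\gamma,-\tfrac12,\delta-\tfrac12;1)$ and $F(-\gamma-1,-\tfrac12,\delta-\tfrac12;1)$, and kills that combination with the contiguous-function recursion \eqref{eq:2.6}; only the term $-\Gamma(\delta)\Gamma(-\tfrac12)/\Gamma(\delta-\tfrac12)$ survives. You instead split the numerator into its three monomials, regularize the endpoint singularity by an auxiliary exponent $z$, evaluate each piece as a single Euler Beta integral for $\mathrm{Re}\,z>0$, continue analytically to $z=-\tfrac12$, and cancel the two offending pieces with the elementary recursion $\Gamma(x+1)=x\Gamma(x)$. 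I checked your bookkeeping: $x_1=\delta$, $x_2=\frac{2-\alpha}{2(\beta-\alpha)}$, $x_3=x_2+1$, $x_2-\tfrac12=\frac{2-\beta}{2(\beta-\alpha)}$, and $(2-\beta)\frac{x_2}{x_2-1/2}=2-\alpha$ all hold, so the $x_2$- and $x_3$-terms do cancel exactly and the surviving $x_1$-term is $2\sqrt{\pi}\,\Gamma(x_1)/\Gamma(x_1-\tfrac12)$, which is \eqref{eq:2.4}. The two cancellations are really the same identity in disguise (the hypergeometric relation at $z=1$ reduces, via Gauss's summation theorem, to a Gamma recursion), and both arguments ultimately rest on the vanishing of the numerator at the endpoint, which you isolate explicitly as $N(1)=0$ and which the paper exploits implicitly through the convergence of the series at $|z|=1$ under $\delta+\gamma>0$. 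What your version buys is the avoidance of hypergeometric machinery and of the term-by-term summation of an infinite series; what the paper's version buys is that no analytic continuation in an auxiliary parameter needs to be set up. Your observation that the closed form degenerates gracefully when $x_1-\tfrac12$ hits a pole of $\Gamma$ (i.e.\ $2p+q=7$, where $H=0$) is consistent with Corollary \ref{cor:2.3}.
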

\begin{proof}
First we change variables $t=s^{\beta-\alpha}$ to obtain
\begin{align*}
H(\alpha,\beta)
&=\int_{0}^{1}\frac{-(2-\alpha)(1-t^{\frac{\alpha}{\beta-\alpha}})+(2-\beta)(1-t^{\frac{\beta}{\beta-\alpha}}) }{t^{\frac{3\alpha}{2(\beta-\alpha)}}(1-t)^{\frac{3}{2}} }\cdot \frac{dt}{ (\beta-\alpha)t^{1-\frac{1}{\beta-\alpha}} }
\\
&=\frac{1}{\beta-\alpha}\int_{0}^{1}t^{\delta-1}(1-t)^{-\frac{3}{2}}
\l[-(2-\alpha)(1-t^\gamma)+(2-\beta)(1-t^{\gamma+1})\r]dt,
\end{align*}
where $\gamma\ce\frac{\alpha}{\beta-\alpha},\,\delta\ce\frac{2-3\alpha}{2(\beta-\alpha)}$.
We set $f(t)\ce1-t^{\gamma}$. By the Taylor expansion around $t=1$, we have
\begin{align}
\label{eq:2.5}
f(t)=\sum_{n=1}^{\infty}\frac{f^{(n)}(1)}{n!}(t-1)^n=-\sum_{n=1}^{\infty}\frac{(-\gamma)_n}{n!}(1-t)^n
\quad\forall t\in(0,1),
\end{align}
where $(-\gamma)_n$ is the Pochhammaer symbol defined by
\begin{align*}
(-\gamma)_n&=(-\gamma)(-\gamma+1)\cdots(-\gamma+(n-1))
\quad\text{for}~n\in\N,
\\
(-\gamma)_0&=1.
\end{align*}
By substituting \eqref{eq:2.5} into the integrands, we obtain
\begin{align*}
(\beta-\alpha)H(\alpha,\beta) 
&=\sum_{n=1}^{\infty}\frac{(2-\alpha)(-\gamma)_n-(2-\beta)(-\gamma-1)_n}{n!}\int_{0}^{1}t^{\delta-1}(1-t)^{n-\frac{3}{2}}dt
\\
&=\sum_{n=1}^{\infty}\frac{(2-\alpha)(-\gamma)_n-(2-\beta)(-\gamma-1)_n}{n!}B\l(\delta,n-\tfrac{1}{2}\r),
\end{align*}
where $B\l(\delta,n-\tfrac{1}{2}\r)$ is the beta function, which is rewritten by using Gamma functions as
\begin{align*}
B\l(\delta,n-\tfrac{1}{2}\r)=\frac{\Gamma(\delta)\Gamma(n-\tfrac{1}{2})}{\Gamma(n+\delta-\tfrac{1}{2})}=\frac{(-\tfrac12)_n}{(\delta-\tfrac12)_n}
\cdot\frac{\Gamma(\delta)\Gamma(-\tfrac12)}{\Gamma(\delta-\tfrac12)}.
\end{align*}
Then, we have
\begin{align*}
(\beta-\alpha)H(\alpha,\beta) &=
\frac{\Gamma(\delta)\Gamma(-\tfrac12)}{\Gamma(\delta-\tfrac12)}
\l(
(2-\alpha)\sum_{n=1}^{\infty}\frac{(-\gamma)_n(-\tfrac12)_n}{(\delta-\tfrac12)_n} 
-(2-\beta)\sum_{n=1}^{\infty}\frac{(-\gamma-1)_n(-\tfrac12)_n}{(\delta-\tfrac12)_n} 
\r)
\\
&=
\frac{\Gamma(\delta)\Gamma(-\tfrac12)}{\Gamma(\delta-\tfrac12)}
\begin{aligned}[t]
\Bigl((2-\alpha)F(-\gamma,-\tfrac{1}{2},\delta-\tfrac{1}{2};1)
&-(2-\beta)F(-\gamma-1,-\tfrac{1}{2},\delta-\tfrac{1}{2};1)
\\&~-(2-\alpha)+(2-\beta) \Bigr),
\end{aligned}
\end{align*}
where $F(a,b,c;z)$ is the Gauss hypergeometric function
\begin{align*}
F(a,b,c;z) \ce\sum_{n=0}^{\infty} \frac{(a)_n(b)_n}{(c)_n}\cdot\frac{z^n}{n!} \quad\text{for}~|z|<1.
\end{align*}
When $c>a+b$, the series absolutely converges for $|z|=1$ (see \cite[9.102]{GR07}). 
In our case this condition is satisfied because
\begin{align*}
\delta-\tfrac{1}{2}>(-\gamma)+\l(-\tfrac{1}{2}\r)\iff \delta+\gamma >0.
\end{align*}
We now use the following recursion formula [9.137, 2.]\cite{GR07}:
\begin{align*}
(2a-c-az+bz)F(a,b,c;z)+(c-a)F(a-1,b,c;z)+a(z-1)F(a+1,b,c;z)=0
\end{align*}
with $z=1$:
\begin{align}
\label{eq:2.6}
(a+b-c)F(a,b,c;1)+(c-a)F(a-1,b,c;1)=0.
\end{align}
If we set
\begin{align*}
a=-\gamma,~b=-\frac12,~c=\delta-\frac12,
\end{align*}
then we have
\begin{align*}
a+b-c&=-\frac{2-\alpha}{2(\beta-\alpha)},
\\
c-a&=\frac{2-\beta}{2(\beta-\alpha)}.
\end{align*}
Therefore, it follows from \eqref{eq:2.6} that
\begin{align*}
(2-\alpha)F(-\gamma,-\tfrac{1}{2},\delta-\tfrac{1}{2};1)
-(2-\beta)F(-\gamma-1,-\tfrac{1}{2},\delta-\tfrac{1}{2};1)=0.
\end{align*}
Substituting this relation into the above formula, we obtain
\begin{align*}
H(\alpha,\beta)
=-\frac{\Gamma(\delta)\Gamma(-\tfrac12)}{\Gamma(\delta-\tfrac12)}.
\end{align*}
The relation \eqref{eq:2.4} is obtained from $ \Gamma \l( -\tfrac{1}{2}\r)=-2\sqrt{\pi}$ and the definition of $\delta$.
\end{proof}
As a simple corollary of Proposition~\ref{prop:2.2}, we have the following.
\begin{corollary}
\label{cor:2.3}
Let $1<p<q<5$ and $1<p<\frac{7}{3}$. Then we have
\begin{align}
\label{eq:2.7}
\lim_{\omega\downarrow0}\sgn M'(\omega)
=\l\{
\begin{alignedat}{2}
&{-1} &\qquad&\text{if}\quad2p+q>7,
\\
&0 &&\text{if}\quad2p+q=7,
\\
&1 &&\text{if}\quad 2p+q<7.
\end{alignedat}
\r.
\end{align}
\end{corollary}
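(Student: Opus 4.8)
The plan is to read the sign of the limiting value $\lim_{\omega\to0}M'(\omega)$ directly off the closed formula \eqref{eq:2.4}, thereby reducing the whole statement to locating a single argument of the Gamma function on the real axis. Recall from the computation preceding Proposition~\ref{prop:2.2} that
\[
\lim_{\omega\to0}M'(\omega)=-\frac{h_0^{1-\frac{\alpha}{2}}}{4W'(h_0)}(p+1)^{1/2}2^{-1/2}\,H(\alpha,\beta).
\]
Since $h_0>0$ and $W'(h_0)<0$, the scalar multiplying $H(\alpha,\beta)$ is strictly positive, so $\sgn\lim_{\omega\to0}M'(\omega)=\sgn H(\alpha,\beta)$. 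Rewriting the two Gamma-arguments in \eqref{eq:2.4} through $\alpha=\frac{p-1}{2}$ and $\beta=\frac{q-1}{2}$ gives $\frac{2-3\alpha}{2(\beta-\alpha)}=\frac{7-3p}{2(q-p)}$ and $\frac{2-2\alpha-\beta}{2(\beta-\alpha)}=\frac{7-2p-q}{2(q-p)}$, so $\sgn H(\alpha,\beta)$ is the sign of the ratio $\Gamma\!\l(\tfrac{7-3p}{2(q-p)}\r)\big/\Gamma\!\l(\tfrac{7-2p-q}{2(q-p)}\r)$.

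Next I would dispose of the numerator. Because $1<p<\frac{7}{3}$ we have $7-3p>0$, and $q>p$ gives $q-p>0$, so the argument $\frac{7-3p}{2(q-p)}$ is a positive real number and $\Gamma$ is positive there. Hence the sign of $H(\alpha,\beta)$ is governed entirely by the sign of $\Gamma$ at the denominator argument $\frac{7-2p-q}{2(q-p)}$. As $q-p>0$, the sign of that argument coincides with the sign of $7-2p-q$, which splits the analysis into the three regimes $2p+q\lessgtr7$ appearing in \eqref{eq:2.7}.

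Finally I would settle the sign of $\Gamma$ in each regime. If $2p+q<7$ the argument is positive and $\Gamma>0$, so the ratio is positive and the value is $+1$. If $2p+q=7$ the argument is exactly $0$, a simple pole of $\Gamma$; since $1/\Gamma$ is entire with $1/\Gamma(0)=0$, the ratio vanishes and the value is $0$. The delicate case is $2p+q>7$: here the argument is negative, and because $\Gamma$ changes sign across each negative integer I must pin down which interval it occupies. This is the main obstacle. I claim $\frac{7-2p-q}{2(q-p)}\in(-1,0)$, on which $\Gamma<0$, so the ratio is negative and the value is $-1$. The inclusion reduces to the inequality $\frac{7-2p-q}{2(q-p)}>-1$, equivalently $7-4p+q>0$, equivalently $q>4p-7$; this holds because $p<\frac{7}{3}$ forces $4p-7<p$, together with $p<q$. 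Collecting the three cases yields \eqref{eq:2.7}.
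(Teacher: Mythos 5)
Your proposal is correct and is essentially the paper's (implicit) argument: the paper states Corollary~\ref{cor:2.3} as an immediate consequence of Proposition~\ref{prop:2.2}, i.e.\ one reads the sign off the quotient $\Gamma\l(\tfrac{7-3p}{2(q-p)}\r)/\Gamma\l(\tfrac{7-2p-q}{2(q-p)}\r)$ exactly as you do. Your verification that the denominator's argument lies in $(-1,0)$ when $2p+q>7$ (via $q>4p-7$, which follows from $p<\tfrac{7}{3}$ and $p<q$) is the one step the paper leaves unsaid, and you handle it correctly.
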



\section{Stability/instability on middle frequencies}
\label{sec:3}

In this section we study stability/instability of standing waves on middle frequencies. To this end it is useful to take advantage of higher derivatives of $M(\omega)$. 
\subsection{Higher derivatives of $M(\omega)$}
\label{sec:3.1}
We recall the integral formula of $F(h)$:
\begin{align*}
F(h)&=h^{1-\frac{\alpha}{2}}\int_{0}^{1}\frac{ c_1(1-s^{\alpha})+c_2(1-s^{\beta})h^{\beta -\alpha}  }{\l( d_1(1-s^{\alpha})+d_2(1-s^{\beta})h^{\beta -\alpha} \r)^{3/2}}ds
\ec h^{1-\frac{\alpha}{2}}F_0(h).
\end{align*}
Here we note that 
\begin{align}
\label{eq:3.1}
\sgn M'(\omega)=\sgn F(h)=\sgn F_0(h) \quad\text{for}~h=h(\omega).
\end{align}
Therefore, to check the sign of $M'(\omega)$ is reduced to investigate the sign of $F_0(h)$.
By a direct calculation we have
\begin{align*}
F_0'(h)&=\frac{d_2}{2}(\beta-\alpha)h^{\beta-\alpha-1}
\int_{0}^{1}\frac{ (1-s^{\beta})\l(-r_1(1-s^{\alpha})-c_2(1-s^{\beta})h^{\beta -\alpha}\r)  }{\l( d_1(1-s^{\alpha})+d_2(1-s^{\beta})h^{\beta -\alpha} \r)^{5/2}}ds
\\
&\ec \frac{d_2}{2}(\beta-\alpha)h^{\beta-\alpha-1}F_1(h),
\\[3pt]
F_1'(h)&=\frac{3d_2}{2}(\beta-\alpha)h^{\beta-\alpha-1}
\int_{0}^{1}\frac{ (1-s^{\beta})^2\l(r_2(1-s^{\alpha})+c_2(1-s^{\beta})h^{\beta -\alpha}\r)  }{\l( d_1(1-s^{\alpha})+d_2(1-s^{\beta})h^{\beta -\alpha} \r)^{7/2}}ds
\\
&\ec \frac{3d_2}{2}(\beta-\alpha)h^{\beta-\alpha-1}F_2(h),
\end{align*}
where 
\begin{align*}
r_1&\ce c_1+d_1(q-p),~r_2\ce c_1+2d_1(q-p).
\end{align*}
We note that $c_1,d_1,r_1,r_2<0$ and $c_2,d_2>0$ if $q<5$.
To sum up, we have the following.
\begin{lemma}
\label{lem:3.1}
Each function $(h_0,\infty)\mapsto F_j(h)\,(j=0,1,2)$ is differentiable and
\begin{align}
F(h)&=h^{\frac{2-\alpha}{2}}F_0(h),
\label{eq:3.2}\\
F_0'(h)&=\frac{d_2}{2}(\beta-\alpha)h^{\beta-\alpha-1}F_1(h),
\label{eq:3.3}\\
F_1'(h)&=\frac{3d_2}{2}(\beta-\alpha)h^{\beta-\alpha-1}F_2(h).
\label{eq:3.4}
\end{align}
Moreover, the function $F_j(h)\,(j=0,1,2)$ is represented as
\begin{align}
F_0(h)&=h^{-1+\frac{\alpha}{2}}\int_{0}^{h}\frac{K(h)-K(s)}{\l( L(h)-L(s) \r)^{3/2}}ds,
\label{eq:3.5}
\\
h^{\beta-\alpha}F_1(h)&=h^{-1+\frac{\alpha}{2}}\int_{0}^{h} \frac{ (h^{\beta}-s^{\beta})(K_1(h)-K_1(s)) }{ \l( L(h)-L(s)\r)^{5/2} }ds,
\label{eq:3.6}
\\
h^{2(\beta-\alpha)}F_2(h)&=h^{-1+\frac{\alpha}{2}}
\int_{0}^{h}\frac{ (h^{\beta}-s^{\beta})^2(K_2(h)-K_2(s)) }{ \l( L(h)-L(s)\r)^{5/2} }ds,
\label{eq:3.7}
\end{align}
where the functions in the integrands are defined by
\begin{align*}
K_1(s)&=-r_1s^{\alpha}-c_2^2s^{\beta},~K_2(s)=r_2s^{\alpha}+c_2^2s^{\beta}.
\end{align*}
\end{lemma}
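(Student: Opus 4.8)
The plan is to treat $F_0,F_1,F_2$ as the $[0,1]$-integrals produced in the derivation preceding the statement and to establish, in order, (i) their differentiability, (ii) the formulas \eqref{eq:3.2}--\eqref{eq:3.4}, and (iii) the $[0,h]$-representations \eqref{eq:3.5}--\eqref{eq:3.7}. Equation \eqref{eq:3.2} is merely the definition $F(h)=h^{1-\alpha/2}F_0(h)$ recorded at the start of the subsection, so nothing is needed there.

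First I would pin down the analytic setting, since this is the one genuinely analytic point. Writing $A(s)=1-s^{\alpha}$ and $B(s)=1-s^{\beta}$, each integrand has the shape (polynomial in $A,B,h^{\beta-\alpha}$) divided by $(d_1A+d_2Bh^{\beta-\alpha})^{k}$ with $k\in\{3/2,5/2,7/2\}$, and the only $h$-dependence sits in the factor $h^{\beta-\alpha}$. The denominator is controlled through the identity $d_1A(s)+d_2B(s)h^{\beta-\alpha}=h^{-\alpha}\bigl(L(h)-L(hs)\bigr)$: since $L\le 0$ on $[0,h_0]$ with $L(h_0)=0$, while $L$ is strictly increasing on $[h_0,\infty)$ (its only critical point lies below $h_0$), one obtains $L(h)-L(hs)>0$ for $h>h_0$, $s\in(0,1)$, vanishing linearly as $s\uparrow 1$ because $L'(h)>0$. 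Consequently each integrand, and its $h$-derivative, has at worst an integrable singularity of order $(1-s)^{-1/2}$ at $s=1$ and is bounded near $s=0$; these bounds are, locally uniformly in $h\in(h_0,\infty)$, dominated by a fixed integrable function, which legitimizes differentiation under the integral sign by the Leibniz rule.

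Next I would carry out the differentiations. Because only $h^{\beta-\alpha}$ depends on $h$, computing $\del_h$ of the rational integrand of $F_0$ produces a numerator $B\bigl[(c_2d_1-\tfrac32c_1d_2)A-\tfrac12c_2d_2Bh^{\beta-\alpha}\bigr]$ over $(d_1A+d_2Bh^{\beta-\alpha})^{5/2}$. The crucial simplification is the single algebraic relation
\begin{align*}
c_2d_1-c_1d_2=-\tfrac12\,d_1d_2(q-p),
\end{align*}
which one checks directly from the definitions of $c_1,c_2,d_1,d_2$ and which collapses the bracket to $\tfrac{d_2}{2}\bigl(-r_1A-c_2Bh^{\beta-\alpha}\bigr)$, giving \eqref{eq:3.3} with the stated $F_1$. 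Differentiating $F_1$ once more and invoking the same identity (now in the form yielding $r_2$) gives \eqref{eq:3.4} and $F_2$, and the same domination argument gives differentiability of $F_2$.

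Finally, the representations \eqref{eq:3.5}--\eqref{eq:3.7} follow by reversing the scaling $s\mapsto hs$ that defined $F_0$. Under $s=h\sigma$ one has $h^{\nu}-s^{\nu}=h^{\nu}(1-\sigma^{\nu})$, so $K(h)-K(h\sigma)$, $L(h)-L(h\sigma)$ and the factors $h^{\beta}-s^{\beta}$ are each homogeneous in $h$; collecting the $h$-powers reproduces the prefactor $h^{-1+\alpha/2}$ (times $h^{\beta-\alpha}$ or $h^{2(\beta-\alpha)}$), and matching the scaled numerators term by term against the integrands of $F_0,F_1,F_2$ identifies the functions $K_1,K_2$ appearing there. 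The care needed here is purely in bookkeeping the exponents of $h$ and of $\bigl(L(h)-L(s)\bigr)$ so that the two sides have matching homogeneity. The main obstacle throughout is therefore organizational — the coefficient identity above together with the exponent tracking — rather than conceptual, the only substantive analytic input being the integrable domination near $s=1$ that justifies differentiating under the integral.
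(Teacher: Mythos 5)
Your proposal is correct and follows essentially the same route as the paper, whose entire proof of this lemma is the ``direct calculation'' displayed just before its statement; you usefully make explicit the coefficient identity $c_2d_1-c_1d_2=-\tfrac12 d_1d_2(q-p)$ that produces $r_1$ and $r_2$, and the integrable $(1-s)^{-1/2}$ domination near $s=1$ that justifies differentiating under the integral sign. One remark: carrying out your term-by-term matching actually yields $K_1(s)=-r_1s^{\alpha}-c_2s^{\beta}$, $K_2(s)=r_2s^{\alpha}+c_2s^{\beta}$ and exponent $7/2$ in \eqref{eq:3.7} --- the $c_2^2$ and the $5/2$ in the statement are typos, as confirmed by the paper's own formulas $s_1^{\beta-\alpha}=-r_1/c_2$ and $s_2^{\beta-\alpha}=-r_2/c_2$.
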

For $j=0,1,2$ we denote a unique positive zero of $K_j$ by $s_j$, and a unique positive extremal point of  $K_j$ by $t_j$, which are explicitly represented as 
\begin{align*}
&s_0^{\beta-\alpha} =-\frac{c_1}{c_2},~s_1^{\beta-\alpha}=-\frac{r_1}{c_2},
~s_2^{\beta-\alpha} =-\frac{r_2}{c_2},\\
&t_0^{\beta-\alpha} =-\frac{c_1\alpha}{c_2\beta},~t_1^{\beta-\alpha}=-\frac{r_1\alpha}{c_2\beta},~t_2^{\beta-\alpha} =-\frac{r_2\alpha}{c_2\beta}.
\end{align*}
From the formula \eqref{eq:3.5}, we deduce that
\begin{align}
\label{eq:3.8}
\sgn F_0(h)=1\quad\text{for}~h\ge s_0.
\end{align}
We note that $\omega_0$ in the introduction is determined by $\omega_0=L(s_0)$.

By a simple calculation, we obtain the following relations.
\begin{lemma}
\label{lem:3.2}
Let $1<p<q<5$. We have 
\begin{enumerate}
\item $h_0<t_0\iff p+q>6$.
\label{pq6}

\item $h_0<t_1\iff q>-3p+8$.
\label{h0t1}

\item $s_0\le t_1\iff \frac{7}{3}\le p$.
\label{73p}

\item $s_0\le t_2\iff \frac{9}{5}\le p$.
\label{95p}
\end{enumerate}
\end{lemma}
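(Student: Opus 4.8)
The plan is to reduce each of the four equivalences to an elementary inequality in $p$ and $q$ by exploiting that every one of the quantities $h_0, s_0, t_0, t_1, t_2$ is specified through its $(\beta-\alpha)$-th power. Since $\beta-\alpha=\frac{q-p}{2}>0$ under the hypothesis $p<q$, the map $x\mapsto x^{\beta-\alpha}$ is strictly increasing on $(0,\infty)$; hence each comparison between two of these points is equivalent to the same comparison between their $(\beta-\alpha)$-th powers, and it suffices to work with the explicit expressions recorded just before the lemma.

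First I would record the signs of the constants involved. Under $1<p<q<5$ one has $p\pm1,\,q\pm1,\,5-p,\,5-q>0$, so $c_2=\frac{5-q}{q+1}>0$, $-c_1=\frac{5-p}{p+1}>0$, and a direct substitution gives $-r_1=\frac{5-3p+2q}{p+1}$ and $-r_2=\frac{5-5p+4q}{p+1}$. Writing $\frac{\alpha}{\beta}=\frac{p-1}{q-1}$ and using $h_0^{\beta-\alpha}=\frac{q+1}{p+1}$, each comparison becomes an inequality between ratios built from these quantities. Because every denominator that appears ($c_2$, $p+1$, $q-1$, and the common factor $\frac{q+1}{p+1}$ in (i)--(ii)) is strictly positive, I may clear denominators without reversing any inequality, reducing each statement to a polynomial inequality.

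The crux of the lemma is that each resulting polynomial difference factors with $(q-p)$ pulled out:
\begin{align*}
\text{(i)}&:\quad (5-p)(p-1)-(5-q)(q-1)=(q-p)(p+q-6),\\
\text{(ii)}&:\quad (5-3p+2q)(p-1)-(5-q)(q-1)=(q-p)(3p+q-8),\\
\text{(iii)}&:\quad (5-3p+2q)(p-1)-(5-p)(q-1)=(q-p)(3p-7),\\
\text{(iv)}&:\quad (5-5p+4q)(p-1)-(5-p)(q-1)=(q-p)(5p-9).
\end{align*}
Since $q-p>0$, the sign of each left-hand side is governed entirely by the linear factor on the right. This yields exactly the stated thresholds: (i) $p+q>6$, (ii) $3p+q>8$ (equivalently $q>-3p+8$), (iii) $p\ge\frac{7}{3}$, and (iv) $p\ge\frac{9}{5}$, with strict inequalities in (i)--(ii) and weak inequalities in (iii)--(iv) matching the corresponding comparisons $h_0<t_0$, $h_0<t_1$, $s_0\le t_1$, $s_0\le t_2$.

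I do not expect a genuine obstacle here: once the monotone power-map reduction is set up, the argument is pure bookkeeping. The only points requiring care are verifying the four quadratic factorizations above (each checked by expansion) and confirming that every denominator stays positive throughout the admissible range $1<p<q<5$, so that no inequality flips when clearing it.
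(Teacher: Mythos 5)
Your proposal is correct, and it is precisely the ``simple calculation'' the paper invokes without writing out: comparing the $(\beta-\alpha)$-th powers via the explicit formulas for $h_0^{\beta-\alpha}$, $s_0^{\beta-\alpha}$, $t_j^{\beta-\alpha}$ and factoring out $(q-p)$; I checked all four factorizations and the sign bookkeeping, and they hold.
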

Let us give some some comments on Lemma \ref{lem:3.2}. 
It follows from \eqref{eq:3.5} that $F_0(h)$ is negative if $h\in (h_0,t_0)$, which is possible if $p+q>6$ from the assertion \ref{pq6}. The condition in \cite{O95} was derived in this way.
The assertion \ref{h0t1} is used to obtain the stability result for the threshold case $2p+q=7$ later. 
It follows from the assertion \ref{73p} that if $p\ge\frac{7}{3}$, $\sgn F_0'(h)=\sgn F_1(h)$ for $h\in (h_0,s_0)$, which was used in \cite{M08} to determine the stability/instability on middle frequencies. When $p\ge\frac{9}{5}$, it follows from the assertion \ref{95p} that $\sgn F_1'(h)=\sgn F_2(h)$ for $h\in (h_0,s_0)$, which is newly used in this paper.

\subsection{Proof of Theorem \ref{thm:1.2}}
\label{sec:3.2}
As an application of Theorem \ref{thm:1.1}, we first prove Theorem \ref{thm:1.2}. We use the following stability/instability criterion.
\begin{lemma}[\cite{GSS87}]
\label{lem:3.3}
Let $\omega>0$. Then, the standing wave $e^{i\omega t}\phi_{\omega}(x)$ is stable if $M'(\omega)>0$, and unstable if $M'(\omega)<0$. 
\end{lemma}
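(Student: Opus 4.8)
The plan is to invoke the abstract Hamiltonian framework of \cite{GSS87}. First I would fix the variational structure: substituting $u=e^{i\omega t}\phi_\omega$ into \eqref{eq:1.1} shows that $\phi_\omega$ solves the profile equation $-\del_x^2\phi_\omega+\omega\phi_\omega+|\phi_\omega|^{p-1}\phi_\omega-|\phi_\omega|^{q-1}\phi_\omega=0$, so that $\phi_\omega$ is a critical point of the action $S_\omega\ce E+\omega Q$, where $Q(u)\ce\tfrac12\norm[u]_{L^2}^2$ is the conserved charge. Because $\phi_\omega$ is a critical point, differentiating $S_\omega(\phi_\omega)$ in $\omega$ gives $\tfrac{d}{d\omega}S_\omega(\phi_\omega)=Q(\phi_\omega)$; hence in the notation of this paper $M(\omega)=Q(\phi_\omega)$ and $M'(\omega)$ coincides with the second derivative $\tfrac{d^2}{d\omega^2}S_\omega(\phi_\omega)$, whose sign governs the GSS dichotomy.

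Next I would record the spectral data of the linearized operators. Splitting a perturbation into real and imaginary parts produces the self-adjoint operators $L_+\ce-\del_x^2+\omega+p|\phi_\omega|^{p-1}-q|\phi_\omega|^{q-1}$ and $L_-\ce-\del_x^2+\omega+|\phi_\omega|^{p-1}-|\phi_\omega|^{q-1}$ on $L^2(\R)$. The required hypotheses are: $L_-\ge0$ with kernel $\spn\{\phi_\omega\}$, and $L_+$ with exactly one negative eigenvalue and kernel $\spn\{\del_x\phi_\omega\}$. In one dimension these follow from standard ground-state ODE analysis, using that $\phi_\omega$ is positive, even, and strictly decreasing on $(0,\infty)$: positivity forces $\phi_\omega$ to be the lowest eigenfunction of $L_-$, giving $L_-\ge0$ with simple kernel, while $\del_x\phi_\omega$ has a single sign change and spans $\ker L_+$, placing $L_+$ at Morse index one by Sturm oscillation.

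With these conditions in place, the abstract theorem of \cite{GSS87} reduces the stability problem to the sign of $M'(\omega)$. When $M'(\omega)>0$ I would run the standard argument: convexity of $\omega\mapsto S_\omega(\phi_\omega)$ makes $\phi_\omega$ a constrained local minimizer of $E$ on $\{Q=Q(\phi_\omega)\}$ modulo the phase and translation symmetries, so that the Hessian $S_\omega''(\phi_\omega)$ is positive definite on the symplectic orthogonal of the two-dimensional symmetry tangent space. A Lyapunov functional built from $E$ and $Q$, combined with their conservation along the flow, then controls $\inf_{\theta,y}\norm[u(t)-e^{i\theta}\phi_\omega(\cdot-y)]_{H^1}$ and yields orbital stability. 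When $M'(\omega)<0$ one instead exhibits a charge-preserving direction along which the action strictly decreases; following \cite{GSS87} I would construct an auxiliary $C^1$ functional and a Lyapunov quantity satisfying a differential inequality of the form $\tfrac{d}{dt}(\cdot)\cgeq(\cdot)$, forcing solutions started near $\phi_\omega$ to leave every small tubular neighborhood of the orbit, i.e. instability.

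The main obstacle is the verification of the spectral hypotheses on $L_\pm$, since the abstract machinery of \cite{GSS87} is conditional on them; everything else is a direct transcription of the general theory. In particular one must establish the exact Morse index of $L_+$ and the nondegeneracy of the two kernels, which rests on uniqueness and qualitative properties of the positive ground state $\phi_\omega$ of the double-power profile equation. These are available in the present one-dimensional setting, so the criterion follows.
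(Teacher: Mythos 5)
The paper offers no proof of this lemma at all---it is quoted directly from \cite{GSS87}---and your proposal is a faithful reconstruction of exactly that cited argument: the action $S_\omega=E+\omega Q$ with $d''(\omega)=M'(\omega)$, the spectral conditions $L_-\ge0$ with $\ker L_-=\spn\{\phi_\omega\}$ and $L_+$ of Morse index one with $\ker L_+=\spn\{\del_x\phi_\omega\}$ (standard in one dimension via positivity and Sturm oscillation), and the GSS dichotomy. Your sketch is correct and takes essentially the same route as the source the paper relies on.
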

\begin{proof}[Proof of Theorem \ref{thm:1.2}]
(i) It follows from \eqref{eq:2.3} that there exists $\mu_0>0$ such that $M'(\omega)<0$ for any $\omega\in (0,\mu_0)$. By Lemma \ref{lem:3.2} \ref{73p} $F_0'>0$ on $(h_0,s_0)$. Therefore, there exists a unique $z_*\in (h_0,s_0)$ such that 
\begin{align}
\label{eq:3.9}
F_0(h)<0~\text{on}~(h_0,z_*),~F_0(z_*)=0,~F_0(h)>0~\text{on}~(z_*,\infty) .
\end{align}
We set $\omega_*\ce L(z_*)$. Since $\sgn M'(\omega)=\sgn F_0(h(\omega))$, it follows from Lemma \ref{lem:3.3} that $e^{i\omega t}\phi_{\omega}(x)$ is stable if $\omega\in(0,\omega_*)$, and unstable if $\omega>\omega_*$.

We now consider the remaining case $\omega=\omega_*$. From \eqref{eq:3.9} we have
\begin{align*}
F_0(z_*)=0,~F_0'(z_*)>0.
\end{align*}
We use the formula of $M'(\omega)$: 
\begin{align*}
M'(\omega)=-\frac{F(h)}{4W'(h)}
=-\frac{h^{\frac{2-\alpha}{2}} }{4W'(h)}F_0(h),
\end{align*}
and note that
\begin{align*}
\l.\frac{d}{dh}\l(-\frac{h^{\frac{2-\alpha}{2}} }{4W'(h)}F_0(h) \r)\r|_{h=z_*}
=-\frac{z_*^{\frac{2-\alpha}{2}} }{4W'(z_*)}F_0'(z_*)>0.
\end{align*}
Therefore, we obtain that
\begin{align*}
M''(\omega_*)=\l.\frac{d}{dh}\l(-\frac{h^{\frac{2-\alpha}{2}} }{4W'(h)}F_0(h) \r)\r|_{h=z_*}\frac{dh}{d\omega}(\omega_*)>0.
\end{align*}
Then, by applying instability theory \cite{CP03, O11, M12} for the degenerate case, we deduce that $e^{i\omega_*t}\phi_{\omega_*}$ is unstable.
\\[5pt]
(ii) When $2p+q\lessgtr7$, the result follows from Corollary \ref{cor:2.3} and Lemma \ref{lem:3.3}. We now consider the threshold case $2p+q=7$. In this case it follows from Lemma \ref{lem:3.2} \ref{h0t1} that $h_0<t_1$. 
From \eqref{eq:3.3} and \eqref{eq:3.6} we have $F_0'(h)>0$ for $h\in(h_0,t_1)$. From \eqref{eq:2.7} and \eqref{eq:3.1} we deduce that 
$\sgn F_0(h)>0$ for for $h\in(h_0,t_1)$. Hence, the stability result follows from Lemma \ref{lem:3.3}.
\end{proof}
\subsection{Proof of Theorem \ref{thm:1.3}}
\label{sec:3.3}
In what follows we assume that $\frac{9}{5}\le p<\frac{7}{3}$. In this case we have 
\begin{align*}
t_0<t_1<s_0\le t_2 <s_1<s_2.
\end{align*}
We note that $K_2$ is strictly decreasing on $(0,t_2)$, so it follows from the formula \eqref{eq:3.7} that
\begin{align}
\label{eq:3.10}
F_2(h)<0\quad\text{for}~h\in (h_0,s_0).
\end{align}

We now prepare a few lemmas to prove Theorem \ref{thm:1.3}.
\begin{lemma}
\label{lem:3.4}
There exists a small $\eps>0$ such that 
\begin{align}
\label{eq:3.11}
\sgn F_0(h)=
\l\{
\begin{alignedat}{2}
&{-1} &\qquad&\text{if}\quad2p+q>7,
\\
&~1 &&\text{if}\quad 2p+q\le7
\end{alignedat}
\r.
\end{align}
for any $h\in(h_0,h_0+\eps)$.
\end{lemma}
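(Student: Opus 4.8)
The plan is to read off the sign of $F_0$ just above $h_0$ from the zero-frequency limit already computed in Corollary \ref{cor:2.3}, and to treat the threshold $2p+q=7$ separately by a monotonicity argument. Since $\omega\mapsto h(\omega)$ is continuous, strictly increasing, with $h(0)=h_0$, proving \eqref{eq:3.11} on some interval $(h_0,h_0+\eps)$ is equivalent to determining $\sgn F_0(h)$ for $h$ slightly larger than $h_0$, which by \eqref{eq:3.1} coincides with $\sgn M'(\omega)$ for small $\omega>0$.

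First I would dispose of the non-threshold cases. Under the standing assumption $\frac{9}{5}\le p<\frac{7}{3}$, Theorem \ref{thm:1.1} shows that $\lim_{\omega\to0}M'(\omega)$ is a finite number, and when $2p+q\ne 7$ its sign is the nonzero value recorded in \eqref{eq:2.7}: negative if $2p+q>7$ and positive if $2p+q<7$. A nonzero limit forces $M'(\omega)$, hence $F_0(h(\omega))$, to keep that sign on a one-sided neighborhood of $\omega=0$, which yields \eqref{eq:3.11} on a suitable $(h_0,h_0+\eps)$ in these two cases.

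The remaining and genuinely delicate case is the threshold $2p+q=7$, where Corollary \ref{cor:2.3} only gives that the limit vanishes, so no sign is available directly. Here I would first observe that $q=7-2p$ forces $3p+q=p+7>8$ (as $p>1$), whence Lemma \ref{lem:3.2} \ref{h0t1} gives $h_0<t_1$. On the interval $(h_0,t_1)$ I would then prove $F_1>0$ from the representation \eqref{eq:3.6}: for $s\in(0,h)$ one has $h^{\beta}-s^{\beta}>0$ and $L(h)-L(s)>0$, while $K_1$ is strictly increasing on $(0,t_1)$, so $h<t_1$ makes $K_1(h)-K_1(s)>0$ for every $s\in(0,h)$; the integrand is therefore positive and $F_1(h)>0$. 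By \eqref{eq:3.3} this gives $F_0'>0$ on $(h_0,t_1)$.

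Finally I would combine this monotonicity with the vanishing boundary value. Writing $M'(\omega)=-\tfrac{h^{(2-\alpha)/2}}{4W'(h)}F_0(h)$ and using $W'(h_0)<0$, the prefactor $-\tfrac{h^{(2-\alpha)/2}}{4W'(h)}$ converges to a finite positive constant as $h\to h_0^+$, so the limit $\lim_{\omega\to0}M'(\omega)=0$ from Theorem \ref{thm:1.1} forces $\lim_{h\to h_0^+}F_0(h)=0$. Since $F_0$ is strictly increasing on $(h_0,t_1)$, it follows that $F_0(h)>0$ for all $h\in(h_0,t_1)$, and \eqref{eq:3.11} holds with any $\eps\le t_1-h_0$. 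The main obstacle is exactly this threshold case: one must rigorously upgrade the vanishing limit to strict positivity, which relies on pairing the boundary value $\lim_{h\to h_0^+}F_0(h)=0$ (a consequence of $2p+q=7$ making the Gamma-factor in the denominator of \eqref{eq:1.2} blow up) with the sign of $F_1$ coming from $h_0<t_1$.
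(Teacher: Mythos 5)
Your proposal is correct and follows essentially the same route as the paper: the paper's proof of Lemma \ref{lem:3.4} simply defers to the proof of Theorem \ref{thm:1.2}\,(ii), which handles $2p+q\neq 7$ via Corollary \ref{cor:2.3} and the threshold case $2p+q=7$ via Lemma \ref{lem:3.2}\,\ref{h0t1} together with $F_0'>0$ on $(h_0,t_1)$ from \eqref{eq:3.3} and \eqref{eq:3.6}, exactly as you do. You merely spell out two steps the paper leaves implicit (the positivity of the integrand giving $F_1>0$ on $(h_0,t_1)$, and the deduction $\lim_{h\to h_0^+}F_0(h)=0$ from the vanishing of the zero-frequency limit), which is a welcome clarification but not a different argument.
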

\begin{proof}
The claim follows from the proof of Theorem \ref{thm:1.2}\,(ii).
\end{proof}
\begin{lemma}
\label{lem:3.5}
$F_0'(h_0+0)=\infty$.
\end{lemma}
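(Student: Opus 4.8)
The plan is to read off the blow-up directly from the integral representation \eqref{eq:3.6} of $F_1$, exploiting the fact that at $h=h_0$ the denominator factor $L(h)-L(s)$ degenerates at the lower endpoint $s=0$. Since \eqref{eq:3.3} gives $F_0'(h)=\frac{d_2}{2}(\beta-\alpha)h^{\beta-\alpha-1}F_1(h)$ with $d_2,\beta-\alpha,h^{\beta-\alpha-1}>0$, we have $\sgn F_0'(h)=\sgn F_1(h)$, and it suffices to prove $F_1(h)\to+\infty$ as $h\downarrow h_0$, equivalently $h^{\beta-\alpha}F_1(h)\to+\infty$.

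First I would record the positivity facts that drive the argument. Because $r_1<c_1<0$ we have $s_0<s_1$, and since $h_0<s_0$ (equivalent to $p<q$) this gives $h_0<s_1$, hence $K_1(h_0)>0$. Moreover, for $h>h_0$ one has $L(h)>0=L(h_0)$ and $L(h)-L(s)>0$ on $(0,h)$, so the integrand in \eqref{eq:3.6} is well defined; near $s=0$ it is nonnegative since $K_1(h)-K_1(s)\to K_1(h)>0$. I would fix a small $\eps>0$ with $\sup_{[0,\eps]}K_1<\tfrac12 K_1(h_0)$, so that $K_1(h)-K_1(s)>0$, and hence the whole integrand, is nonnegative on $[0,\eps]$ for all $h$ in a right-neighborhood of $h_0$, and then split $\int_0^h=\int_0^\eps+\int_\eps^h$.

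The tail $\int_\eps^h$ stays bounded as $h\downarrow h_0$: the only singularity of the integrand on $[\eps,h]$ is at the upper endpoint $s=h$, where the numerator vanishes while $L(h)-L(s)\sim L'(h_0)(h-s)$ with $L'(h_0)>0$, producing at worst an integrable $(h-s)^{-1/2}$ factor; this contribution therefore remains bounded. For the main part $\int_0^\eps$ I would apply Fatou's lemma to the nonnegative integrands
\[
g_h(s)\ce\frac{(h^\beta-s^\beta)\l(K_1(h)-K_1(s)\r)}{\l(L(h)-L(s)\r)^{5/2}},
\]
which converge pointwise on $(0,\eps)$ to $g_{h_0}(s)=\dfrac{(h_0^\beta-s^\beta)\l(K_1(h_0)-K_1(s)\r)}{(-L(s))^{5/2}}$. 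Near $s=0$ one has $-L(s)\sim |d_1|\,s^{\alpha}$ and the numerator tends to $h_0^\beta K_1(h_0)>0$, so $g_{h_0}(s)\sim C\,s^{-5\alpha/2}$ with $C>0$; since $p\ge\frac{9}{5}$ is precisely $\frac{5\alpha}{2}\ge1$, the integral $\int_0^\eps g_{h_0}(s)\,ds$ diverges. Hence $\liminf_{h\downarrow h_0}\int_0^\eps g_h=+\infty$, and combining with the bounded tail yields $h^{\beta-\alpha}F_1(h)\to+\infty$, which is the claim.

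The main obstacle I anticipate is the bookkeeping needed to isolate the divergence cleanly: verifying that the integrand is genuinely nonnegative on $[0,\eps]$ uniformly for $h$ near $h_0$ (so that Fatou applies), and confirming that $\int_\eps^h$ really stays bounded despite the endpoint behavior at $s=h$ sliding down to $h_0$. Both reduce to elementary but careful asymptotics of $K_1$, $L$ and their differences. The conceptual content is simply that the origin singularity $s^{-5\alpha/2}$ becomes non-integrable exactly when $p\ge\frac{9}{5}$, which is also the standing hypothesis of this subsection.
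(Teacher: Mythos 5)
Your argument is correct and follows essentially the same route as the paper: both identify the $s^{-5\alpha/2}$ singularity of the integrand for $F_1$ at the origin, note that it is non-integrable exactly when $p\ge\frac{9}{5}$, and check that its coefficient is positive (your $K_1(h_0)>0$ is precisely the paper's condition $5-p+2(q-p)-(5-q)=3(q-p)>0$). The only difference is that the paper simply evaluates $F_1(h_0)=\infty$ from the rescaled formula and invokes \eqref{eq:3.3}, whereas you additionally justify the passage to the limit $h\downarrow h_0$ by splitting the integral and applying Fatou's lemma — a detail the paper leaves implicit.
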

\begin{proof}
From the formula of $F_1$, we have
\begin{align*}
F_1(h_0)=2^{-5/2}(p+1)^{3/2}\int_0^1\frac{ (1-s^{\beta})\bigl( (5-p+2(q-p))(1-s^{\alpha})-(5-q)(1-s^\beta) \bigr) }{ (s^{\alpha}-s^{\beta})^{5/2}}ds.
\end{align*}
In the integrand the singularity of the origin is like $s^{-\frac{5}{2}\alpha}$.
We note that
\begin{align*}
\frac{5}{2}\alpha\ge 1\iff p\ge \frac{9}{5},
\end{align*}
so the integral above diverges in this case.
Combined with the fact
\begin{align*}
5-p+2(q-p)-(5-q)=3(q-p)>0,
\end{align*}
we deduce that $F_1(h_0)=\infty$. Hence, the conclusion follows from \eqref{eq:3.3}.
\end{proof}
\begin{lemma}
\label{lem:3.6}
There exists at most one zero of $F_0'(h)$ on $(h_0,s_0)$. If the zero exists, which we denote by $z_0$, then we have
\begin{align}
\label{eq:3.12}
\begin{aligned}
&F_0'(h)>0&&\text{if}~h\in (h_0,z_0),
\\
&F_0'(h)<0&&\text{if}~h\in (z_0,s_0).
\end{aligned}
\end{align}
\end{lemma}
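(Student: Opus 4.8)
The plan is to reduce the whole statement to the monotonicity of $F_1$ on $(h_0,s_0)$. The two structural identities \eqref{eq:3.3} and \eqref{eq:3.4} give, since $d_2>0$, $\beta>\alpha$ and $h>0$ on the relevant range,
\[
\sgn F_0'(h)=\sgn F_1(h),\qquad \sgn F_1'(h)=\sgn F_2(h),
\]
so that the zeros of $F_0'$ on $(h_0,s_0)$ are exactly the zeros of $F_1$, and the variation of $F_1$ is controlled entirely by the sign of $F_2$.

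First I would invoke \eqref{eq:3.10}, which tells us that $F_2(h)<0$ throughout $(h_0,s_0)$; by the second identity above this forces $F_1'(h)<0$ on the whole interval, i.e. $F_1$ is strictly decreasing there. A strictly monotone function has at most one zero, which already yields the first assertion: $F_0'$ has at most one zero on $(h_0,s_0)$.

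It remains to fix the sign pattern \eqref{eq:3.12} when a zero $z_0$ does exist. For this I would use Lemma \ref{lem:3.5}, which gives $F_0'(h_0+0)=\infty$ and hence $F_1(h_0+0)=+\infty>0$: the strictly decreasing function $F_1$ starts out positive at the left endpoint. Consequently, if $F_1$ vanishes at some $z_0\in(h_0,s_0)$, monotonicity forces $F_1>0$ on $(h_0,z_0)$ and $F_1<0$ on $(z_0,s_0)$; translating back through $\sgn F_0'=\sgn F_1$ gives precisely \eqref{eq:3.12}.

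There is no real analytic difficulty here: once $F_2<0$ is known on all of $(h_0,s_0)$, the monotonicity of $F_1$ does all the work. The only point requiring care—and the place where the standing hypothesis $\frac{9}{5}\le p$ enters—is the \emph{global} negativity of $F_2$ on $(h_0,s_0)$; this is exactly \eqref{eq:3.10}, which rests on $s_0\le t_2$ (so that the kernel $K_2$ is strictly decreasing on $(0,s_0)$). I would therefore treat \eqref{eq:3.10} and the left-endpoint blow-up from Lemma \ref{lem:3.5} as the two essential inputs, and note that the argument deliberately says nothing about whether the zero actually occurs: both alternatives are genuinely possible and are distinguished later by means of Theorem \ref{thm:1.1}.
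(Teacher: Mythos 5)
Your proof is correct and uses the same two inputs as the paper's own argument, namely \eqref{eq:3.10} combined with \eqref{eq:3.4} to control the sign of $F_1'$, and the blow-up $F_1(h_0+0)=+\infty$ from Lemma \ref{lem:3.5} to fix the sign near the left endpoint. The only (immaterial) difference is that you conclude $F_1$ is strictly decreasing on all of $(h_0,s_0)$, whereas the paper only evaluates $F_1'$ at a putative zero of $F_1$ and notes it is negative there; both yield the at-most-one-zero claim and the sign pattern \eqref{eq:3.12}.
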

\begin{proof}
If we assume $F_1$ has a zero $z_0$ on $(h_0,s_0)$, then it follows from \eqref{eq:3.4} and \eqref{eq:3.10} that
\begin{align}
\label{eq:3.13}
F_1'(z_0)=\frac{3d_2}{2}(\beta-\alpha)z_0^{\beta-\alpha-1}F_2(z_0)<0.
\end{align}
This yields that the number of zeros of $F_1$ on $(h_0,s_0)$ is at most one. The last assertion follows from \eqref{eq:3.13} and $\sgn F_0'=\sgn F_1 >0$ on 
$(h_0,h_0+\eps)$ for some small $\eps>0$.
\end{proof}
\begin{lemma}
\label{lem:3.7}
If we assume $F_0(h_*)>0$ for some $h_*\in(h_0,s_0)$, then we have $F_0(h)>0~\text{on}~(h_*,\infty)$.
\end{lemma}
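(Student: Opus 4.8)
The plan is to combine the monotonicity structure of $F_0$ on $(h_0,s_0)$ provided by Lemmas \ref{lem:3.5} and \ref{lem:3.6} with the unconditional positivity coming from \eqref{eq:3.8}. First I would dispose of the range $h\ge s_0$: there \eqref{eq:3.8} gives $\sgn F_0(h)=1$ directly, so it remains only to prove $F_0(h)>0$ for $h\in(h_*,s_0)$. Since $h_*\in(h_0,s_0)$ by hypothesis we have $h_*<s_0$, so this is precisely the interval where all the work lies.

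Next I would record the shape of $F_0$ on $(h_0,s_0)$. By Lemma \ref{lem:3.5} we have $F_0'(h_0+0)=\infty$, so $F_0'>0$ just to the right of $h_0$; by Lemma \ref{lem:3.6}, $F_0'$ has at most one zero $z_0$ on $(h_0,s_0)$, and if it exists then $F_0'>0$ on $(h_0,z_0)$ and $F_0'<0$ on $(z_0,s_0)$. Hence $F_0$ is either strictly increasing on all of $(h_0,s_0)$ (when no zero exists), or strictly increasing on $(h_0,z_0)$ and strictly decreasing on $(z_0,s_0)$, with $z_0$ the unique interior maximizer.

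Now I would run a short case analysis for $h\in(h_*,s_0)$. If $F_0$ is increasing throughout, then $F_0(h)>F_0(h_*)>0$ and we are done. If the peak $z_0$ exists, then on any part of $(h_*,s_0)$ lying in the increasing region $(h_0,z_0]$ one has $F_0(h)\ge F_0(h_*)>0$, while on any part lying in the decreasing region $(z_0,s_0)$ one has $F_0(h)>F_0(s_0)>0$, the last inequality being exactly \eqref{eq:3.8} evaluated at $s_0$. Splitting $(h_*,s_0)$ at $z_0$ (when $h_*<z_0$) or noting it lies entirely in the decreasing region (when $h_*\ge z_0$) covers both possibilities, so $F_0>0$ on $(h_*,s_0)$. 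Together with the range $h\ge s_0$ this yields $F_0(h)>0$ on $(h_*,\infty)$.

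There is no serious obstacle here; the argument is purely monotonicity bookkeeping. The one point that must not be overlooked is that the decreasing stretch $(z_0,s_0)$ does not threaten positivity: although $F_0$ falls after its peak, it cannot reach zero before $s_0$ precisely because $F_0(s_0)>0$ by \eqref{eq:3.8}. This is the mechanism that prevents a positive value at $h_*$ from being undone before entering the regime $h\ge s_0$ where positivity is automatic.
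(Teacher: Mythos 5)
Your proof is correct and follows essentially the same route as the paper's: both rest on the unimodal shape of $F_0$ on $(h_0,s_0)$ given by Lemma \ref{lem:3.6} together with the positivity $F_0(s_0)>0$ from \eqref{eq:3.8}. The only difference is cosmetic---you argue directly by bounding $F_0$ from below by $F_0(h_*)$ on the increasing stretch and by $F_0(s_0)$ on the decreasing stretch, whereas the paper supposes a zero $z_1\in(h_*,s_0)$ and derives a contradiction from \eqref{eq:3.12} and $F_0(s_0)>0$.
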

\begin{proof}
If there is no zero of $F_0'$ on $(h_0,s_0)$, then $F_0'(h)>0$ on 
$(h_0,s_0)$ and the claim follows from \eqref{eq:3.8}. Now we consider the case that there exists a zero $z_0$ of $F_0'$ on $(h_0,s_0)$.
If we assume that $F_0$ has a zero $z_1\in (h_*,s_0)$, then $F_0'(z_1)\le0$. From \eqref{eq:3.12}, we obtain that $z_0\le z_1<s_0$. Therefore, we deduce that
\begin{align*}
F_0(z_1)=0,\quad F_0'(h)<0~\text{on}~(z_1,s_0)
\end{align*}
which contradicts $F_0(s_0)>0$. Hence $F_0$ has no zero $(h_*,s_0)$, the conclusion follows from \eqref{eq:3.8}.
\end{proof}
The proof of Theorem \ref{thm:1.3} is reduced to prove the following claim.
\begin{proposition}
\label{prop:3.8}
The following statements hold.
\begin{enumerate}
\item If $2p+q>7$, there exists $z_*\in(0,s_0)$ such that
\begin{align*}
F_0(h)<0~\text{on}~(h_0,z_*),~ F_0(z_*)=0,~F_0(h)>0~\text{on}~(z_*,\infty).
\end{align*}

\item If $2p+q\le7$, $F_0(h)>0~\text{on}~(h_0,\infty)$.
\end{enumerate}
\end{proposition}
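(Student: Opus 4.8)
The plan is to separate the two cases and to reduce everything to the sign information already assembled: the behaviour of $F_0$ near $h_0$ (Lemma~\ref{lem:3.4}), its positivity for $h\ge s_0$ from \eqref{eq:3.8}, the fact that $F_0$ starts off increasing because $F_0'(h_0+0)=\infty$ (Lemma~\ref{lem:3.5}), the near-unimodality of $F_0$ on $(h_0,s_0)$ encoded in Lemma~\ref{lem:3.6}, and the propagation principle of Lemma~\ref{lem:3.7}.

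For part (ii), with $2p+q\le7$, Lemma~\ref{lem:3.4} gives $F_0>0$ on a right neighbourhood $(h_0,h_0+\eps)$. Choosing any $h_*$ in this neighbourhood with $h_*<s_0$ and applying Lemma~\ref{lem:3.7}, I get $F_0>0$ on $(h_*,\infty)$; together with the positivity on $(h_0,h_*]$ already provided by Lemma~\ref{lem:3.4}, this gives $F_0>0$ on all of $(h_0,\infty)$. So part (ii) is essentially immediate.

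For part (i), with $2p+q>7$, Lemma~\ref{lem:3.4} gives $F_0<0$ near $h_0$ whereas \eqref{eq:3.8} gives $F_0(s_0)>0$, so by the intermediate value theorem $F_0$ has a zero in $(h_0,s_0)$; I let $z_*$ denote the first such zero, so that $F_0<0$ on $(h_0,z_*)$ and $F_0(z_*)=0$. The heart of the matter is to show $F_0>0$ on $(z_*,\infty)$, i.e. that $F_0$ never returns to negative values and $z_*$ is its only zero. If $F_0'$ has no zero on $(h_0,s_0)$ then $F_0$ is strictly increasing there and the conclusion is immediate; otherwise Lemma~\ref{lem:3.6} gives a single interior maximum at some $z_0$, with $F_0$ increasing on $(h_0,z_0)$ and decreasing on $(z_0,s_0)$. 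I first show $z_*\in(h_0,z_0)$: if instead $z_*\ge z_0$, then $F_0(z_0)\le0$, contradicting both that $z_0$ is the global maximum of $F_0$ on $(h_0,s_0)$ and that $F_0(s_0)>0$. Hence $F_0'(z_*)>0$ and $F_0>0$ on $(z_*,z_0]$; since $F_0$ only decreases on $(z_0,s_0)$ from $F_0(z_0)$ down to the still-positive $F_0(s_0)$, it remains positive up to $s_0$, and \eqref{eq:3.8} extends positivity beyond $s_0$. (Equivalently, once $F_0$ turns positive just after $z_*$ one may feed any such point into Lemma~\ref{lem:3.7}.)

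I expect the main obstacle to be precisely the bookkeeping in part (i): excluding a second sign change after $z_*$. The delicate situation is when $F_0'$ genuinely vanishes at some $z_0$, so that $F_0$ is non-monotone on $(h_0,s_0)$; there the argument must combine the maximum property $F_0(z_0)\ge F_0(s_0)>0$ with the fact that the decreasing branch terminates at the positive value $F_0(s_0)$, thereby confining the single crossing to the increasing branch. Everything else is a routine use of the intermediate value theorem together with Lemmas~\ref{lem:3.4}--\ref{lem:3.7}.
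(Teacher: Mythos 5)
Your proof is correct and follows essentially the same route as the paper: near-$h_0$ sign from Lemma~\ref{lem:3.4}, the intermediate value theorem against $F_0(s_0)>0$ from \eqref{eq:3.8}, the unimodality of Lemma~\ref{lem:3.6} to exclude a second sign change, and Lemma~\ref{lem:3.7} (or equivalently the explicit monotonicity on $(z_0,s_0)$) to propagate positivity. Incidentally, your case labels are the consistent ones --- the paper's printed proof attaches the conditions $2p+q\ge 7$ and $2p+q<7$ to the wrong sign behaviour relative to Lemma~\ref{lem:3.4}, which is evidently a typo.
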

\begin{proof}
If there is no zero of $F_0'$ on $(h_0,s_0)$, 
the claim follows from Lemmas~\ref{lem:3.4}, \ref{lem:3.5} and \eqref{eq:3.8}.

Now we consider the case that there exists a zero $z_0$ of $F_0'$ on $(h_0,s_0)$. From Lemma~\ref{lem:3.6}, $F_0'$ satisfies \eqref{eq:3.12}.
If $2p+q\ge 7$, it follows from Lemma \ref{lem:3.4} that $F_0>0~\text{on}~(h_0,h_0+\eps)$. Combined with Lemma~\ref{lem:3.7}, we deduce that $F_0>0~\text{on}~(h_0,\infty)$. 
If $2p+q<7$, it follows from Lemma~\ref{lem:3.4} and \eqref{eq:3.8} that
there exists $z_*\in(h_0,s_0)$ satisfying
\begin{align*}
F_0(h)<0~\text{on}~(h_0,z_*),~\text{and}~F_0(z_*)=0,
\end{align*}
which yields that $F_0'(z_*)\ge0$. If $F_0'(z_*)=0$, it follows from Lemma \ref{lem:3.6} that $z_*=z_0$. From \eqref{eq:3.12}, we have
\begin{align*}
F_0(z_*)=0,\quad F_0'(h)<0~\text{on}~(z_*,s_0).
\end{align*}
which contradicts $F_0(s_0)>0$. Hence $F_0'(z_*)>0$. This yields that$F_0>0~\text{on}~(z_*,z_*+\eps)$ for small $\eps>0$. Combined with Lemma~\ref{lem:3.7}, we deduce that $F_0>0~\text{on}~(z_*,\infty)$. This completes the proof.
\end{proof}
The proof of Theorem \ref{thm:1.3} from Proposition \ref{prop:3.8} is done in the same way as the proof of Theorem \ref{thm:1.2}. We omit the details.

\section*{Acknowledgments}
This work was supported by JSPS KAKENHI Grant Number JP19J01504.


\end{document}